\numberwithin{equation}{section}
\newtheorem{theorem}{Theorem}[section]
\newtheorem{proposition}[theorem]{Proposition}
\newtheorem{lemma}[theorem]{Lemma}
\theoremstyle{definition}
\newtheorem{definition}[theorem]{Definition}
\newtheorem{remark}[theorem]{Remark}
\def\R{\mathbb R}
\def\N{\mathbb N}
\def\C{\mathbb C}
\begin{document}

\title[Logarithmic Schr\"odinger equations]
{Semiclassical states for fractional logarithmic Schr\"{o}dinger equations}

 \author{Xiaoming An}

 \address{School of Mathematics and Statistics\, \&\, Guizhou University of Finance and Economics, Guiyang, 550025, P. R. China}
\email{xman@mail.gufe.edu.cn}

%
%
%

%

\begin{abstract}
In this paper, we consider the following fractional logarithmic Schr\"odinger equation
\begin{equation*}
\varepsilon^{2s}(-\Delta)^s u + V(x)u=u\log |u|^2\ \ \text{in}\ \R^N,
\end{equation*}
where $\varepsilon>0$, $N\ge 1$, $V(x)\in C(\R^N,[-1,+\infty))$. By introducing an interesting penalized function, we show that the problem has a positive solution $u_{\varepsilon}$ concentrating at a local minimum of $V$ as $\varepsilon\to 0$. There is no restriction on decay rates of $V$, especially it can be compactly supported.

{\bf Key words:} Fractional Logarithmic Schr\"odinger; penalized; concentration; compactly  supported.

{\bf AMS Subject Classifications:} 35J05, 35J20, 35J10.
\end{abstract}

\maketitle

\section{Introduction}\label{s1}
We study the following fractional Schr\"odinger equation with logarithmic nonlinear term:
\begin{equation}\label{eq1.1}
\varepsilon^{2s}(-\Delta)^s u + V(x)u=u\log |u|^2,\ \ x \in\ \R^N,
\end{equation}
where $\varepsilon>0$, $N\in \N$, $V(x)\in C(\R^N,\R)$ is a continuous potential. This type of problem comes from the study of standing waves $\psi(x,t) = e^{iEt/\varepsilon}u(x)$ of the following fractional nonlinear Schr\"odinger equation:
\begin{equation}\label{Adeq1.2}
i\frac{\partial \psi}{\partial t} = \varepsilon^{2s}(-\Delta)^s \psi + (V(x)+ E)\psi - f(\psi),
\end{equation}
where $f:\C\to \R$ is a function with $f(z) = g(|z|)z$ and $g:\R^+\to \R$ is a  real function(which is $\log|\cdot|^2$ in \eqref{eq1.1}). For power type nonlinearities, the fractional Schr\"odinger equation was introduced by Laskin (\cite{2}, \cite{3}) as an extension of the classical nonlinear Schr\"{o}dinger equations $s = 1$ in which the Brownian motion of the quantum paths is replaced by a L\`{e}vy flight.

Equation \eqref{eq1.1} is a generalization of the classical Nonlinear  Schr\"odinger Equation with logarithmic nonlinearity:
\begin{equation}\label{AAdeq1.4}
-\varepsilon^{2}\Delta u + V(x)u= u\log |u|^2,\ \ x \in\ \R^N
\end{equation}
which admits applications related to quantum mechanics, quantum optics, nuclear physics, transport and diffusion phenomena, open quantum systems, effective quantum gravity, theory of superfluidity and Bose-Einstein condensation(see \cite{K.G.Zlo-2010,Z.Q.-Wang-C.Zhang-ARMA-2019} and the references therein for more details). For this problem, one can check easily that there always exists $u\in H^1(\R^N)$ such that
$$
\int_{\R^N}|u|^2\log|u|^2=-\infty,
$$
which makes the natural functional corresponding to \eqref{AAdeq1.4} is not well defined in $H^1(\R^N)$. To overcome this difficulty, considering the case that $\varepsilon>0$ is fixed, M. Squassin et al. in \cite{M.Squassina-A.Szulkin-2015}, decompose the functional $I$ into the sum of a $C^1$ functional and a convex l.s.c(short for lower semicontinuous hereafter) functional and then use the Mountain Pass Theorem 3.2 in \cite{A.Szulkin-1986} to find a critical point.
W. Shuai in \cite{W.Shuai-JMP-2021} added some growth conditions such as $\liminf\limits_{|x|\to+\infty}V(x)|x|^{2\sigma}>0$(which makes $\int_{\R^N}|u|^2\log|u|^2$ is $C^1$ in $H^1_V(\R^N)=\{u\in H^1(\R^N):V(x)|u|^2\in L^1(\R^N)\}$) and then proved that the problem has a least energy sign-changing solution and a positive ground state. Considering that $\varepsilon>0$ is fixed and $V(x)\equiv\lambda>0$, Wiliam C. Troy in \cite{W.C.Troy-ARMA-2016} developed  a new comparison method to show that the positive solution of \eqref{AAdeq1.4} is unique up to translation when $1\le N\le 9$; Using the result of Serrin-Tang \cite{J.Serrin-M.Tang-IUMJ-2000}, D'Avenia et al. pointed out in \cite{P.D'Avenia-CCM-2014} that the positive solution of \eqref{AAdeq1.4} is also unique up to translation. Following, letting $u_p$ be the unique positive solution of \eqref{AAdeq1.4}, i.e.,
$$
-\Delta u + \lambda u =|u|^{p-2}{u},
$$
where $p>2$, Wang et al. in \cite{Z.Q.-Wang-C.Zhang-ARMA-2019} proved that $u_p$ will converge to the unique solution of \eqref{AAdeq1.4} in the sense of $C^{2,\alpha}(\R^N)$ if $p\to 2$. Considering the case that $\varepsilon\to 0$, by penalized idea, C. Zhang et al. in \cite{C.Zhang-X.Zhang-CV-2020} showed that \eqref{AAdeq1.4} has solutions $u_{\varepsilon}$ concentrating at various types of topological critical points of $V$ as $\varepsilon\to 0$ provided that $\lim\limits_{|x|\to\infty}V(x)|x|^{-2}>-\infty$($V$ can be unbounded below).

To our best knowledge, there are few results about \eqref{eq1.1} in the nonlocal case $0<s<1$. When $\varepsilon>0$ is fixed, D'Avenia et al. in \cite{P.D'Avenia-MMAS-2015} obtained existence of infinitely weak solutions. In \cite{A.H.Ardila-NA-2017}, it was proved by compactness method that \eqref{eq1.1} has ground states which are stable.

In this paper, we are interesting in semiclassical analysis of \eqref{eq1.1}. From a mathematical point of view, the transition from quantum to classical mechanics can be formally performed by letting $\varepsilon\to 0$. For small $\varepsilon > 0$, solutions $u_{\varepsilon}$ are usually referred to as semiclassical bound states.

In order to state our main result, we need to give some notations and assumptions.
For  $s\in(0,1)$, the fractional Sobolev space $H^s(\mathbb{R}^N)$ is defined as
$$
H^s(\mathbb{R}^N) = \Big\{u\in L^2(\mathbb{R}^N):\frac{u(x) - u(y)}{|x - y|^{N/2 + s}}\in L^2(\mathbb{R}^N\times\mathbb{R}^N)\Big\},
$$
endowed with the norm
$$
\|u\|_{H^s(\mathbb{R}^N)} = \Big(\int_{\mathbb{R}^N}|(-\Delta)^{s/2}u|^2 + u^2\,  {\mathrm{d}}x \Big)^{\frac{1}{2}},
$$
where
$$
\int_{\mathbb{R}^N}|(-\Delta)^{s/2}u|^2{\mathrm{d}}x  = \int_{\mathbb{R}^{2N}}\frac{|u(x) - u(y)|^2}{|x - y|^{N + 2s}}\,{\mathrm{d}}x \,  {\mathrm{d}}y.
$$
Like the classical case, we define the space $\dot{H}^s(\R^N)$ as the completion of $C^{\infty}_c(\R^N)$ under the norm
$$
\|u\|^2 = \int_{\mathbb{R}^N}|(-\Delta)^{s/2}u|^2{\mathrm{d}}x  = \int_{\mathbb{R}^{2N}}\frac{|u(x) - u(y)|^2}{|x - y|^{N + 2s}}\,{\mathrm{d}}x \,  {\mathrm{d}}y.
$$
We will use the following local fractional Sobolev space
$$
W^{s,2}(\Omega) = \Big\{u\in L^2(\Omega): \frac{|u(x) - u(y)|}{|x - y|^{\frac{N}{2} + s}}\in L^2(\Omega\times\Omega)\Big\}.
$$
It is easy to check that $W^{s,2}(\Omega)$ is a Hilbert space under the following inner product
$$
(u,v) = \int_{\Omega}\int_{\Omega}\frac{(u(x) - u(y))(v(x) - v(y))}{|x - y|^{N + 2s}}dxdy + \int_{\Omega}uvdx\ \ \forall u,v\in W^{s,2}(\Omega),
$$
see \cite{4} for more details.
Also from \cite{4},  the fractional Laplacian is defined as
\begin{align*}
(-\Delta)^s u(x) \, &= \, C  (N,s)P.V.\int_{\mathbb{R}^N}\frac{u(x) - u(y)}{|x - y|^{N + 2s}}\,{\mathrm{d}}y
\\[2mm]
\, & =\,   C(N,s)\lim_{\varepsilon\to 0}\int_{\mathbb{R}^N          \backslash        B_{\varepsilon}(x)      }\frac{u(x) - u(y)}{|x - y|^{N + 2s}}\,{\mathrm{d}}y.
\end{align*}
For the sake of simplicity, we define for every $u\in \dot{H}^s(\R^N)$ the fractional $(-\Delta)^s$ as
$$
(-\Delta)^s u(x) = 2\int_{\mathbb{R}^N}\frac{u(x) - u(y)}{|x - y|^{N + 2s}}\,{\mathrm{d}}y.
$$
Note that for every $\varphi\in C^{\infty}_c(\R^N)$, it holds
$$
\int_{\R^N}(-\Delta)^s u(x)\varphi(x) = \frac{d}{dt}\int_{\R^N}|(-\Delta)^{s/2}(u+t\varphi)|^2\large|_{t=0}.
$$
Our solutions will be found in the following weighted fractional Sobolev  space:
$$
\mathcal{D}^s_{V,\varepsilon}(\mathbb{R}^N) = \left\{u\in \dot{H}^s(\R^N):\,\,u\in L^2\big(\mathbb{R}^N,\big(V(x)+1\big)\,{\mathrm{d}}x\big)\right\},
$$
endowed with the norm
$$
\|u\|_{\mathcal{D}^s_{V,\varepsilon}(\mathbb{R}^N)} = \Big(\int_{\mathbb{R}^N}\varepsilon^{2s}|(-\Delta)^{s/2}u|^2 + (V(x) + 1)u^2\,{\mathrm{d}}x \Big)^{\frac{1}{2}}.
$$
For the potential term $V$, we assume that $V$ is continuous and

$(\mathcal{V}_1)$ $V(x)+1\ge 0$;

$(\mathcal{V}_2)$  there exist open bounded sets $\Lambda\subset\subset U$ with smooth boundaries $\partial\Lambda,\ \partial U$, such that
\begin{equation}\label{{Aeq1.3}}
0 < \lambda = \inf_{\Lambda}(V+1) < \inf_{U\backslash\Lambda}(V+1).
\end{equation}
Without loss of generality, we assume that $0\in\Lambda$.

Now, with the notations and assumptions above at hand, we are in a position to state our main result:

\begin{theorem}\label{th1.1}
Let $V$ satisfy $(\mathcal{V}_1)$ and $(\mathcal{V}_2)$. Then there exists an $\varepsilon_0>0$ such that \eqref{eq1.1} has a  positive solution $u_{\varepsilon}$ if $\varepsilon\in(0,\varepsilon_0)$. Moreover, $u_{\varepsilon}$ has a global maximum point $x_{\varepsilon}$ which satisfies $$\lim_{\varepsilon\to 0}V(x_{\varepsilon}) = \inf_{x\in\Lambda}V(x)$$ and
\begin{equation*}\label{eq6.4}
u_{\varepsilon}(x)\le\frac{C\varepsilon^{N+2s}}{\varepsilon^{N+2s}+|x - x_{\varepsilon}|^{N+2s}},\  x\in\R^N,
\end{equation*}
where $C$ is a positive constant.
\end{theorem}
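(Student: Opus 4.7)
The plan is to apply a penalization method of del Pino--Felmer type, adapted from Zhang--Zhang \cite{C.Zhang-X.Zhang-CV-2020} for the local logarithmic problem, combined with the nonsmooth critical point theory of Szulkin to account for the fact that $u\mapsto\int_{\R^N}u^{2}\log u^{2}\,dx$ is not $C^{1}$ on $\mathcal{D}^s_{V,\varepsilon}(\R^N)$. First I would fix a small constant $a>0$ with $-2\log a$ larger than $\sup_{U}(V+1)$, and introduce a penalized nonlinearity $g_\varepsilon(x,u)$ that coincides with $u\log u^{2}$ on $\Lambda$ and is replaced outside $\Lambda$ by a cut-off/linearized version whose contribution is dominated by the quadratic form $\int(V(x)+1)u^{2}\,dx$ whenever $|u|\le a$. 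After the standard Squassina--Szulkin splitting $\tfrac12 u^{2}(\log u^{2}-1)=F_{1}(u)-F_{2}(u)$ with $F_{1}$ convex lower semicontinuous and $F_{2}$ of class $C^{1}$, the resulting penalized functional $J_\varepsilon$ fits Szulkin's framework and the Mountain Pass Theorem of \cite{A.Szulkin-1986} produces a nontrivial nonnegative critical point $u_\varepsilon$ with mountain pass level $c_\varepsilon$.

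Second, I would perform the usual semiclassical rescaling: pick a maximum point $x_\varepsilon$ of $u_\varepsilon$ and set $v_\varepsilon(y)=u_\varepsilon(\varepsilon y+x_\varepsilon)$. Using test paths built from $U((x-x_{0})/\varepsilon)$ for $x_{0}\in\Lambda$ realizing $\inf_\Lambda(V+1)=\lambda$, where $U$ is the positive radial ground state of the autonomous limit equation $(-\Delta)^s U+\lambda U=U\log U^{2}$ (existing and decaying like $|y|^{-N-2s}$ by \cite{P.D'Avenia-MMAS-2015,A.H.Ardila-NA-2017}), one obtains $c_\varepsilon\le\varepsilon^{N}m_{\lambda}+o(\varepsilon^{N})$, where $m_{\lambda}$ is the corresponding ground state level. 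Combined with the observation that by $(\mathcal{V}_2)$ the penalized exterior problem has a strictly larger ground state level than $m_{\lambda}$, this energy comparison forces $x_\varepsilon\in\Lambda$ for small $\varepsilon$ and makes $v_\varepsilon$ relatively compact in $\dot{H}^s(\R^N)$, with any subsequential limit a positive ground state of the limit problem at $\lambda$; in particular $V(x_\varepsilon)\to\inf_\Lambda V$.

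The main obstacle is to turn off the penalization, that is, to show $u_\varepsilon(x)\le a$ outside $\Lambda$ so that $u_\varepsilon$ actually solves the original equation \eqref{eq1.1}; the nonlocality of $(-\Delta)^s$ rules out the purely local maximum principle arguments available in \cite{C.Zhang-X.Zhang-CV-2020}. To get around this I would build the algebraic barrier $W_\varepsilon(x)=C\varepsilon^{N+2s}/(\varepsilon^{N+2s}+|x-x_\varepsilon|^{N+2s})$, whose distributional fractional Laplacian satisfies the pointwise lower bound needed to dominate the linear part of the penalized equation on $\R^N\setminus B_{R\varepsilon}(x_\varepsilon)$ for $R$ large, and compare $u_\varepsilon$ with $W_\varepsilon$ by means of the weak maximum principle for $(-\Delta)^s+\tfrac12(V+1)$; the decay of $v_\varepsilon$ at infinity together with the concentration at $x_\varepsilon$ supplies the required far-field data on $\partial B_{R\varepsilon}(x_\varepsilon)$. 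This comparison yields both the quantitative decay estimate stated in the theorem and the smallness $u_\varepsilon\le a$ outside $\Lambda$ that deactivates the penalization; positivity of $u_\varepsilon$ then follows from the strong maximum principle for the fractional Laplacian applied to the resulting nonnegative solution of \eqref{eq1.1}.
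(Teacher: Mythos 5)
Your overall architecture (truncate the logarithmic term off $\Lambda$, run Szulkin's mountain pass theorem for a $C^1$ plus convex l.s.c.\ functional, compare energies to locate the concentration point, then prove a decay estimate that switches the truncation off) is the same as the paper's. The genuine gap is in your choice of penalization and, as a consequence, in the deactivation step. You truncate in the del Pino--Felmer/Zhang--Zhang style, asking that the exterior nonlinearity be dominated by the quadratic form $\int(V(x)+1)u^2$ when $|u|\le a$; but the theorem only assumes $V+1\ge 0$, with no lower bound or decay restriction outside $U$ (that is the whole point of the result), so where $V+1$ is small or vanishes your penalized equation outside $\Lambda$ yields only $\varepsilon^{2s}(-\Delta)^su_{\varepsilon}+\tfrac12(V+1)u_{\varepsilon}\le 0$, an inequality with no uniform zeroth-order coefficient. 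Your barrier comparison then cannot close: for $W_{\varepsilon}(x)=C\varepsilon^{N+2s}/(\varepsilon^{N+2s}+|x-x_{\varepsilon}|^{N+2s})$ the nonlocal tail coming from the concentration region makes $(-\Delta)^sW_{\varepsilon}$ \emph{negative} and of the same order as $W_{\varepsilon}$ in the far field, so it can only be absorbed by a mass term $\mu W_{\varepsilon}$ with $\mu>0$ bounded away from zero (this is exactly how the Frank--Lenzmann--Silvestre comparison is used in the paper), and $\tfrac12(V+1)$ supplies no such $\mu$. This is precisely why the paper states that the $s=1$ penalization of \cite{C.Zhang-X.Zhang-CV-2020} cannot be transplanted here; your claimed ``pointwise lower bound'' for the fractional Laplacian of the barrier is not available under $(\mathcal{V}_1)$ alone.

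What the paper does instead is to truncate by $\chi_{\R^N\backslash\Lambda}\max\{2u,-u(1+\log u^2)\}$ as in \eqref{AAAeq1} and \eqref{Revisioneq2.1}: outside $\Lambda$ the nonlinearity is replaced by an absorption term of size at least $2u$ at \emph{every} amplitude. This keeps the exterior part $G_2$ convex, nonnegative and l.s.c.\ (so Szulkin's framework still applies, and $2J_{\varepsilon}-\langle J_{\varepsilon}',u\rangle$ controls the global $L^2$ norm needed for the nonlocal/Hardy estimates), and, crucially, it yields $\varepsilon^{2s}(-\Delta)^su_{\varepsilon}+\min\{\lambda,2\}u_{\varepsilon}\le 0$ on $\R^N\backslash B_{\varepsilon R}(x_{\varepsilon})$ uniformly in $\varepsilon$ and independently of how degenerate $V+1$ is; only then does the standard fractional decay estimate give $u_{\varepsilon}\le C\varepsilon^{N+2s}/(\varepsilon^{N+2s}+|x-x_{\varepsilon}|^{N+2s})$ and hence $-(1+\log u_{\varepsilon}^2)\ge 2$ off $\Lambda$, which deactivates the penalization. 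Two further remarks: the del Pino--Felmer domination is also unnatural for the logarithmic nonlinearity, since at small amplitude $u\log u^2\le -Mu$ for any $M$, i.e.\ the nonlinearity is already strongly absorbing and the real obstruction is the lack of well-definedness and convexity structure of the functional, not its size; and to know $1+\log u_{\varepsilon}^2\le 0$ on $\Lambda\backslash B_{\varepsilon R}(x_{\varepsilon})$ you need the uniform smallness $\|u_{\varepsilon}\|_{L^{\infty}(U\backslash B_{\varepsilon R}(x_{\varepsilon}))}\to 0$ of Lemma \ref{le3.4}(iii), which the paper extracts from the multi-bump lower bound of Proposition \ref{pr3.3}, not merely from subsequential convergence of the rescaled solutions. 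With your penalization the theorem as stated (no restriction on $V$ at infinity) is out of reach; you would either need extra hypotheses on $V$, or an exterior truncation that manufactures its own uniform mass, as the paper's does.
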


The difficulties in the proof of Theorem \ref{th1.1} are stated as follows.
Firstly, there exists $u\in\mathcal{D}^s_{V,\varepsilon}(\R^N)$ such that
$$
\int_{\R^N}|u|^2\log|u|^2 = -\infty,
$$
which makes the natural Euler-Lagrange functional corresponding to \eqref{eq1.1}, i.e.,
\begin{equation*}
I_{\varepsilon}(u):=\frac{1}{2}\int_{\R^N}\big(\varepsilon^{2s}|(-\Delta)^{s/2}u|^2 + (V(x)+1)|u|^2\big)-\frac{1}{2}\int_{\R^N}u^2\log u^2 \mathrm{d}x,
\end{equation*}
is not well defined in $\mathcal{D}^s_{V,\varepsilon}(\R^N)$. Then all the methods developed for \eqref{eq1.1} in the case that the logarithmic term replaced by power type nonlinear term failed(see \cite{100,APX}). Moreover, the expectation that the concentration should occur at a local minimum of $V$ in $\Lambda$ makes us have to truncate the nonlinear term outside $\Lambda$.
Noting that since the ground states of limiting equation of \eqref{eq1.1} decay like $\frac{1}{|x|^{N+2s}}$, we can not use the penalized idea in \cite{C.Zhang-X.Zhang-CV-2020} which deal with the case $s=1$ to truncate the nonlinear term or the potential term.


Creatively, to overcome the two difficulties above, we use the characteristic function $4\chi_{\R^N\Lambda}$ to truncate the nonlinear term(see \eqref{AAAeq1} below), i.e., we firstly study the following penalized problem
\begin{equation}\label{Revisioneq2.1}
\varepsilon^{2s}(-\Delta)^su_{\varepsilon} + (V(x) + 1)u_{\varepsilon} = \chi_{\Lambda}(1 + \log|u|^2)u_{\varepsilon} - \chi_{\R^N\backslash\Lambda}\max\{2u,-u(1 + \log |u|^2)\}.
\end{equation}
The corresponding functional $J_{\varepsilon}$, all through  is  not well defined in $\mathcal{D}^s_{V,\varepsilon}$, is the sum of a $C^1$ functional and a convex l.s.c. functional(see \eqref{AAAeq1} and Remarks \ref{re2.3} below), whose critical points can be found  by the Mountain Pass Theorem 3.2 in \cite{A.Szulkin-1986}.
Then,  a penalized solution  $u$ of \eqref{Revisioneq2.1} is indeed a solution of the origin problem \eqref{eq1.1} if
$$
-(1+\log|u|^2)\ge 2\ \ \text{on}\ \R^N\backslash\Lambda.
$$
This,  after the proof of concentration of $u$, can be checked easily  by the well-known fact that a sub-solution of $(-\Delta)^su + u\le0$ decays like $\frac{1}{|x|^{N+2s}}$(see the last part of Section 3 for more details).

We need to emphasize that the nonlocal term $(-\Delta)^s$ makes the estimates more difficult than the classical case. In fact, for a smooth function $f\in C^{\infty}_c(\R^N)$, one can not compute $(-\Delta)^sf$ as precisely as $-\Delta f$(see Appendix A below for example).

The nonlocal effect makes us have to know the global $L^2$-norm information of penalized solution, which in \cite{APX} was given by the assumption $\liminf_{|x|\to \infty}(V(x)+1)|x|^{2s}>0$. But, in the present paper, the global $L^2$-norm information of penalized solution will be obtained by the fractional Hardy inequality: There exists a positive constant $C_{N,s}$ such that
\begin{equation}\label{Adeq1.4}
\int_{\R^N}\frac{|u(x)|^2}{|x|^{2s}}\,{\mathrm{d}}x\le C_{N,s}|(-\Delta)^{s/2}|^2_2
\end{equation}
for all $u\in \dot{H}^s(\R^N)$(see the proof of \eqref{Adeq3.3} in Appendix A for example).

\vspace{0.5cm}
\textbf{Plan of the paper.} In Section \ref{s2}, we obtain the penalized problem by truncating the logarithmic term in \eqref{eq1.1} outside by $4\chi_{\R^N\Lambda}u$, then we use the Mountain Pass Theorem 3.2 in \cite{A.Szulkin-1986} to obtain a penalized solution $u_{\varepsilon}$. In Section \ref{s3}, we study the concentration of $u_{\varepsilon}$ and then linearize the penalized equation in Section \ref{s2}. At the last part of Section \ref{s3},  we use the well-known decay estimates of fractional Schr\"oinger equations to prove  the asymptotic behaviour of $u_{\varepsilon}$, which implies that  $u_{\varepsilon}$ solves the origin problem.

\vspace{1cm}

\section{The penalized problem}\label{s2}
The following inequality exposes the relationship between $H^s(\R^N)$ and the Banach space $L^q(\R^N)$.
\begin{proposition}\label{wpr2.1} ({Fractional version of the Gagliardo$-$Nirenberg inequality \cite{4}})
For every $u\in H^s(\mathbb{R}^N)$,
\begin{align} \label{inequality2.1}
\|u\|_{q} \leq C \|(-\Delta)^{s/2}u\|^{\theta}_{2}\|u\|^{1 -\theta }_{2},
\end{align}
where $q\in [2,2^*_s]$ and $\theta $ satisfies $\frac{\theta}{2^*_s} + \frac{(1 -\theta)}{2} = \frac{1}{q}$.
\end{proposition}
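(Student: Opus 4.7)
The plan is to combine the fractional Sobolev embedding with H\"{o}lder interpolation between the $L^2$ and $L^{2^*_s}$ endpoints, which is the standard route for Gagliardo--Nirenberg type inequalities.

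First I would dispose of the endpoints. At $q=2$ the exponent condition $\frac{\theta}{2^*_s}+\frac{1-\theta}{2}=\frac{1}{q}$ forces $\theta=0$, and the claim is the trivial identity $\|u\|_2\le\|u\|_2$. At $q=2^*_s$ the same condition forces $\theta=1$, so the inequality reduces to the pure fractional Sobolev embedding
\[
\|u\|_{2^*_s}\ \le\ C\,\|(-\Delta)^{s/2}u\|_2,
\]
valid whenever $N>2s$. I would simply cite this result from \cite{4}; if one wants a self-contained proof, one writes $u=I_s\bigl((-\Delta)^{s/2}u\bigr)$ through the Riesz potential of order $s$ and applies the Hardy--Littlewood--Sobolev inequality. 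In the borderline regime $N<2s$, which for $s\in(0,1)$ only means $N=1$ with $s>1/2$, one instead uses $\dot{H}^s(\mathbb{R})\hookrightarrow L^\infty(\mathbb{R})$ (a consequence of Morrey-type embedding for fractional spaces) and the same interpolation scheme with $L^\infty$ in place of $L^{2^*_s}$ applies verbatim.

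For intermediate $q\in(2,2^*_s)$, the prescribed $\theta$ satisfies, after multiplying the exponent relation by $q$,
\[
\frac{q\theta}{2^*_s}+\frac{q(1-\theta)}{2}\ =\ 1.
\]
Hence H\"{o}lder's inequality with the conjugate exponents $p_1=\frac{2^*_s}{q\theta}$ and $p_2=\frac{2}{q(1-\theta)}$ (both $\ge 1$ since $q\theta\le 2^*_s$ and $q(1-\theta)\le 2$) gives
\[
\int_{\mathbb{R}^N}|u|^{q}\,\mathrm{d}x\ =\ \int_{\mathbb{R}^N}|u|^{q\theta}\cdot|u|^{q(1-\theta)}\,\mathrm{d}x\ \le\ \|u\|_{2^*_s}^{q\theta}\,\|u\|_2^{q(1-\theta)}.
\]
Taking $q$-th roots yields $\|u\|_q\le\|u\|_{2^*_s}^\theta\|u\|_2^{1-\theta}$, and feeding in the Sobolev embedding from the previous step produces exactly \eqref{inequality2.1}.

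The only nontrivial ingredient is the fractional Sobolev embedding itself, and that is precisely what I would take as an external input from \cite{4}; granted it, the remaining argument is a one-line application of H\"{o}lder's inequality. So the "hard part" is only hard if one insists on proving the embedding from scratch; within the scope of this paper it is a citation, and the proposition is essentially a formal consequence.
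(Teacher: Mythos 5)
Your argument is correct and is exactly the standard derivation behind the cited result: the critical Sobolev embedding $\|u\|_{2^*_s}\le C\|(-\Delta)^{s/2}u\|_2$ from \cite{4} combined with H\"older interpolation between the $L^2$ and $L^{2^*_s}$ endpoints, which is all the paper itself relies on (it states the proposition by citation without proof). The only cosmetic caveat is in your borderline remark: for $N=1$, $s>1/2$ it is $H^s(\mathbb{R})$, not the homogeneous space $\dot H^s(\mathbb{R})$, that embeds into $L^\infty(\mathbb{R})$ (constants have vanishing seminorm), but this case lies outside the range where $2^*_s$ is even defined and does not affect the proposition as stated.
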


By the proposition above, we know that $ H^s(\mathbb{R}^N)$ is continuously  embedded into $L^q(\mathbb{R}^N)$ for $\ q\in [2,2^*_s]$. Moreover, on bounded set, the embedding is compact (see \cite{4}), i.e.,
$$
H^s(\mathbb{R}^N)\subset\subset L^q_{loc}(\mathbb{R}^N)\ \text{compactly, if}\ q\in [1,2^*_s).
$$

Our proof will rely on the following fractional logarithmic Sobolev inequality(see \cite{A.Cotsiilis-N.Tavoularis-2005}):

\begin{proposition}\label{Adpr2.2} For any $u\in H^s(\R^N)$, it holds
$$
\int_{\R^N}|u|^2\log \Big(\frac{|u|^2}{\|u\|^2_2}\Big) + \Big(N+\frac{N}{s}\log a + \log \frac{s\Gamma(\frac{N}{2})}{\Gamma(\frac{N}{2s})}\Big)\|u\|^2_2\le \frac{a^2}{\pi^s}\|(-\Delta)^s\|^2_2,\ a>0.
$$
\end{proposition}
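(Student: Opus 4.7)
The plan is to reduce Proposition~\ref{Adpr2.2} to the sharp fractional Sobolev inequality of Cotsiolis--Tavoularis via the classical ``differentiate in $q$ at $q=2$'' trick, followed by a single--parameter linearization of a logarithm that generates the free parameter $a$.

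First, I would recall the sharp fractional Sobolev embedding $\|u\|_{2_s^*}^{2}\le S_{N,s}\|(-\Delta)^{s/2}u\|_2^{2}$, where $2_s^*=2N/(N-2s)$ and $S_{N,s}$ is the explicit constant from \cite{A.Cotsiilis-N.Tavoularis-2005} expressed in terms of $\pi$, $\Gamma(N/2)$ and $\Gamma(N/(2s))$. Pure H\"older interpolation between $L^2$ and $L^{2_s^*}$ gives, for every $q\in[2,2_s^*]$,
\begin{equation*}
\|u\|_q \le \|u\|_2^{1-\theta(q)}\|u\|_{2_s^*}^{\theta(q)},\qquad \theta(q)=\frac{N(q-2)}{2qs},
\end{equation*}
with equality at $q=2$ since $\theta(2)=0$. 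Taking logarithms and differentiating from the right at $q=2$, the left-hand side produces $\tfrac{1}{4\|u\|_2^2}\int_{\R^N} u^2\log(u^2/\|u\|_2^2)$, the right-hand side produces $\theta'(2)\log(\|u\|_{2_s^*}/\|u\|_2)$ with $\theta'(2)=N/(4s)$, and the inequality sense is preserved because the difference RHS$-$LHS is nonnegative near $q=2$ and vanishes at $q=2$. Combining this with the Sobolev bound yields the multiplicative log-Sobolev
\begin{equation*}
\int_{\R^N} u^2\log\frac{u^2}{\|u\|_2^2}\,dx \le \frac{N\|u\|_2^2}{2s}\log\frac{S_{N,s}\|(-\Delta)^{s/2}u\|_2^2}{\|u\|_2^2}.
\end{equation*}

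To introduce the parameter $a$, I would apply the elementary estimate $\log t \le \gamma t - 1 - \log\gamma$ (valid for all $t,\gamma>0$, with equality at $\gamma t=1$) to $t = S_{N,s}\|(-\Delta)^{s/2}u\|_2^2/\|u\|_2^2$, choosing $\gamma$ so that the coefficient of $\|(-\Delta)^{s/2}u\|_2^2$ on the right becomes exactly $a^2/\pi^s$; explicitly $\gamma = 2sa^2/(N\pi^s S_{N,s})$. Rearranging, the $(N/s)\log a$ contribution in the constant multiplying $\|u\|_2^2$ appears automatically from $-\log\gamma$ (since $\log\gamma$ contains a $2\log a$ term amplified by the prefactor $N/(2s)$), while the remaining, $a$-independent, piece reduces after substitution of the Cotsiolis--Tavoularis value of $S_{N,s}$ to $N+\log(s\Gamma(N/2)/\Gamma(N/(2s)))$ via standard $\Gamma$-function identities.

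The conceptual structure (sharp Sobolev $\to$ H\"older interpolation $\to$ $q$-derivative $\to$ linearization) is entirely routine. The principal obstacle is the bookkeeping in the last step: verifying that the combination $\tfrac{N}{2s}\bigl[1+\log\tfrac{2s}{N\pi^s S_{N,s}}\bigr]$ simplifies exactly to $N+\log(s\Gamma(N/2)/\Gamma(N/(2s)))$, which requires inserting the precise Cotsiolis--Tavoularis formula for $S_{N,s}$ and manipulating ratios of $\Gamma((N\pm 2s)/2)$. Since Proposition~\ref{Adpr2.2} is attributed to \cite{A.Cotsiilis-N.Tavoularis-2005}, this Gamma-function identification is the part that would ultimately be pulled verbatim from that reference rather than reproved.
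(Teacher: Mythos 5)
The paper itself gives no proof of Proposition~\ref{Adpr2.2} --- it is quoted from \cite{A.Cotsiilis-N.Tavoularis-2005} --- so the only issue is whether your derivation actually reproduces that inequality. It does not, and the failure is located precisely in the step you defer as ``bookkeeping.'' Your chain (sharp Sobolev $\to$ H\"older interpolation $\to$ differentiation at $q=2$ $\to$ tangent line for $\log$) is internally correct and yields
\begin{equation*}
\int_{\R^N}|u|^2\log\frac{|u|^2}{\|u\|_2^2}\,dx+\frac{N}{2s}\Bigl(1+\log\frac{2sa^2}{N\pi^sS_{N,s}}\Bigr)\|u\|_2^2\le\frac{a^2}{\pi^s}\|(-\Delta)^{s/2}u\|_2^2,
\end{equation*}
so matching the Proposition would require the identity $\frac{N}{2s}\bigl(1+\log\frac{2s}{N\pi^sS_{N,s}}\bigr)=N+\log\frac{s\Gamma(N/2)}{\Gamma(N/(2s))}$, equivalently $S_{N,s}=\frac{2s}{N\pi^s}\,e^{1-2s}\bigl(\Gamma(\frac{N}{2s})/(s\Gamma(\frac N2))\bigr)^{2s/N}$. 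This identity is false. Test $s=1$, $N=3$: the right-hand side equals $2/(3\pi e)\approx 0.078$, while the sharp Sobolev constant is $S_{3,1}=\frac{4}{3}(2\pi^2)^{-2/3}\approx 0.183$. With $a=1$ your method then gives the coefficient $\approx 1.73$ in front of $\|u\|_2^2$, where the Proposition claims $N=3$; since that coefficient sits on the small side of the inequality, what you prove is strictly weaker than the stated result and does not imply it.

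The structural reason is that Proposition~\ref{Adpr2.2} is the \emph{sharp} logarithmic Sobolev inequality (at $s=1$ it is exactly the Lieb--Loss/Gross inequality, saturated by Gaussians), whereas differentiating the sharp Sobolev inequality at $q=2$ for fixed $N$ transports the Aubin--Talenti extremals, not Gaussians, and therefore can only place the non-optimal constant $S_{N,s}$ inside the logarithm. This is why Cotsiolis--Tavoularis do not argue this way: the factor $\Gamma(\frac{N}{2s})/(s\Gamma(\frac N2))=\pi^{-N/2}\int_{\R^N}e^{-|x|^{2s}}dx$ in their constant comes from a Jensen/Fourier-side argument against the reference density $e^{-|x|^{2s}}$, not from the Sobolev embedding. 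Your first three steps are fine as a proof of \emph{a} fractional log-Sobolev inequality, but to obtain the stated constant you must either reproduce the argument of \cite{A.Cotsiilis-N.Tavoularis-2005} or simply cite it, as the paper does; the interpolation route cannot be repaired to close the gap.
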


Now we are going to modify the origin problem \eqref{eq1.1}. In order to use the Mountain Pass Theorem 3.2 in \cite{A.Szulkin-1986}, considering the vanishing of $V$ and the concentration should occur in $\Lambda$, we modified the nonlinear term as follows. Define
\begin{align}\label{AAAeq1}
\nonumber G_1(x,s)&=\frac{1}{2}\chi_{\Lambda}(x)s^2_+\log s^2_+,\\  G_2(x,s)&=\frac{1}{2}\chi_{\R^N\backslash\Lambda}(x)\int_{0}^{s}\max\{4t_+,-2t_+(1 + \log t^2_+)\}dt.
\end{align}
\begin{remark}\label{re2.3}
\textbf{I}mportantly, since  $G'_2(x,s)$ is nondecreasing on $s$, $G_2(x,s)\ge 0$ is convex on $s$ for all $x\in\R^N\backslash\Lambda$ and the functional $\mathcal{G}^2:\mathcal{D}^s_{V,\varepsilon}(\R^N)\to \R$ given by
$$
\mathcal{G}^2(u)= \int_{\R^N}G_2(x,u)
$$
is convex and l.s.c by Fatou's Lemma.

Proposition \ref{wpr2.1} and the boundedness of $\Lambda$ imply that the functional $\mathcal{G}^1:\mathcal{D}^s_{V,\varepsilon}(\R^N)\to \R$ given by
$$
\mathcal{G}^1(u)= \int_{\R^N}G_1(x,u)
$$
is $C^1$. So the functional $$J_{\varepsilon}(u)=\Phi_{\varepsilon}(u) + \Psi(u)$$ with $$\Phi_{\varepsilon}(u) = \frac{1}{2}\|u\|^2_{V,\varepsilon}-\mathcal{G}^1(u)\ \ \text{and}\ \ \Psi=\mathcal{G}^2(u)$$
 has the form stated in \cite{A.Szulkin-1986}.
\end{remark}

 By the remark above, although $J_{\varepsilon}$ is not $C^1$, we can still use the Mountain Pass Theorem 3.2 in \cite{A.Szulkin-1986} to find a critical point for $J_{\varepsilon}$. We first state some necessary definitions corresponding to those functionals has the form of $J_{\varepsilon}$.
\begin{definition}\label{de2.1}
Let $E$ be a Banach space, $E'$ be the dual space of $E$ and $\langle \cdot,\cdot\rangle$ be the duality paring between $E'$ and $E$. Let $J:E\to \R$ be a functional of the form $J(u)=\Phi(u)+\Psi(u)$, where $\Phi\in C^1(E,\R)$ and  $\Psi$ is convex and l.s.c.. We have the following definitions:

$(i)$ A critical point of $J$ is a point $u\in E$ such that $J(u)<+\infty$ and $0\in\partial J(u)$, i.e.
$$
\langle\Phi'(u),v-u\rangle + \Psi(v)-\Psi(u)\ge 0,\ \forall v\in E.
$$

$(ii)$ A Palais-Smale sequence at level $c$ for $J$ is a sequence $(u_n)\subset E$ such that $J(u_n)\to c$ and there is a numerical sequence $\sigma_n\to 0^+$ with
$$
\langle\Phi'(u_n),v-u_n\rangle + \Psi(v)-\Psi(u_n)\ge -\sigma_n\|v-u_n\|,\ \forall v\in E.
$$

$(iii)$ The functional $J$ satisfies the Palais-Smale condition at level $c$ $(PS)_c$ condition if all Palais-Smale sequence at level $c$ has a convergent subsequence.

$(iv)$ The set $D(J):=\{u\in E:J(u)<+\infty\}$ is called the effective domain of $J$.

\end{definition}

According to the Corollary 2.6 in \cite{M.Squassina-A.Szulkin-2015}, we have
\begin{proposition}\label{Adpr2.4}
Let $\sup_nJ_{\varepsilon}(u_n)<+\infty$. Then $(u_n)$ is a Palais-Smale sequence if and only if $J'(u_n)\to 0$ in $\big(\mathcal{D}^s_{V,\varepsilon}(\R^N)\big)'$.
\end{proposition}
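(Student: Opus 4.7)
The approach is to show that, on its effective domain $D(J_\varepsilon)$, the functional $J_\varepsilon = \Phi_\varepsilon + \Psi$ is Gâteaux differentiable, after which the equivalence becomes a direct consequence of two convex-analysis one-liners. The hypothesis $\sup_n J_\varepsilon(u_n) < +\infty$ forces $\Psi(u_n) < +\infty$, so every $u_n$ lies in $D(J_\varepsilon)$ and the differentiability applies at each term of the sequence. The result can then be seen as the verification, for this specific $J_\varepsilon$, of the hypotheses of Corollary 2.6 in \cite{M.Squassina-A.Szulkin-2015}.

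The first task is Gâteaux differentiability of $\Psi = \mathcal{G}^2$. From the formula in \eqref{AAAeq1} one computes
$$G_2'(x,s) = \tfrac{1}{2}\chi_{\R^N\setminus\Lambda}(x)\max\{4s_+, -2s_+(1+\log s_+^2)\},$$
which is continuous in $s$ and satisfies $|G_2'(x,s)| \le C|s|(1+|\log s^2|)$. Fix $u \in D(J_\varepsilon)$ and $w \in \mathcal{D}^s_{V,\varepsilon}(\R^N)$. By convexity of $s \mapsto G_2(x,s)$, the difference quotient $t^{-1}[G_2(x,u+tw) - G_2(x,u)]$ is nondecreasing in $t>0$ and converges pointwise a.e.\ to $G_2'(x,u)w$ as $t \to 0^+$. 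Monotone/dominated convergence---with the majorant controlled via the fractional logarithmic Sobolev inequality of Proposition \ref{Adpr2.2}---gives
$$\Psi'(u;w) = \int_{\R^N} G_2'(x,u)\,w \, \mathrm{d}x,$$
which is linear and continuous in $w$. Thus $\Psi$ is Gâteaux differentiable at $u$ with $\Psi'(u) = G_2'(\cdot,u) \in (\mathcal{D}^s_{V,\varepsilon})'$, and combined with $\Phi_\varepsilon \in C^1$ (Remark \ref{re2.3}) one obtains $J_\varepsilon'(u) = \Phi_\varepsilon'(u) + \Psi'(u)$ on $D(J_\varepsilon)$.

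For the direction ``PS $\Rightarrow J_\varepsilon'(u_n) \to 0$'': test the Szulkin inequality with $v = u_n + tw$, $t>0$, divide by $t$, and let $t \to 0^+$; convexity plus the computation above yield $\langle J_\varepsilon'(u_n), w\rangle \ge -\sigma_n\|w\|$ for every $w$, and replacing $w$ by $-w$ gives the reverse bound, so $\|J_\varepsilon'(u_n)\|_{(\mathcal{D}^s_{V,\varepsilon})'} \le \sigma_n \to 0$. For the converse: convexity and Gâteaux differentiability of $\Psi$ yield the subgradient inequality $\Psi(v) - \Psi(u_n) \ge \langle \Psi'(u_n), v - u_n\rangle$, whence for every $v$,
$$\langle \Phi_\varepsilon'(u_n), v - u_n\rangle + \Psi(v) - \Psi(u_n) \ge \langle J_\varepsilon'(u_n), v - u_n\rangle \ge -\|J_\varepsilon'(u_n)\|\,\|v - u_n\|,$$
which is exactly the Szulkin-PS condition with $\sigma_n = \|J_\varepsilon'(u_n)\| \to 0$. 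The main obstacle is justifying the dominated-convergence step in the computation of $\Psi'(u;w)$: the logarithmic blow-up of $G_2'(x,s)$ as $s \to 0$ must be controlled uniformly for $t \in (0,1]$ by an integrable majorant on $\R^N \setminus \Lambda$, which is precisely where Proposition \ref{Adpr2.2} and Hölder's inequality combine to dominate the sub-linear $|s||\log s^2|$ term by a function in $L^1$.
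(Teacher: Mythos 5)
The paper does not actually prove this proposition; it simply invokes Corollary 2.6 of \cite{M.Squassina-A.Szulkin-2015}, so you are attempting to supply a proof that the author omitted. Your skeleton (convex difference quotients in one direction, the subgradient inequality in the other) is the right one, but it rests on a claim that is not established and is in fact false in general: that $\Psi=\mathcal{G}^2$ is G\^ateaux differentiable at \emph{every} $u\in D(J_\varepsilon)$ with $\Psi'(u)=G_2'(\cdot,u)$ a \emph{bounded} linear functional on $\mathcal{D}^s_{V,\varepsilon}(\R^N)$. Since $V+1$ is only assumed nonnegative, membership in $\mathcal{D}^s_{V,\varepsilon}(\R^N)$ controls only the $L^{2^*_s}$-norm of $u$ on $\R^N\backslash\Lambda$, not any $L^p$-norm with $p<2^*_s$. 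Taking $u\sim|x|^{-\beta}$ with $2\beta$ slightly larger than $N$ (so that $u\in D(\Psi)$) and $w\sim|x|^{-\alpha}$ with $\alpha$ slightly larger than $(N-2s)/2$ (so that $w$ is in the space), one gets $G_2'(x,u)\,w\sim|x|^{-\alpha-\beta}\log|x|$ with $\alpha+\beta\approx N-s<N$, so $\int_{\R^N\backslash\Lambda}G_2'(x,u)\,w=+\infty$; correspondingly $u+w\notin D(\Psi)$ and the one-sided derivative $\Psi'(u;w)$ is $+\infty$. The effective domain of $\Psi$ has empty interior, so the usual ``convex functions have finite directional derivatives'' argument is unavailable. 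For the same reason your proposed majorant cannot exist: Proposition \ref{Adpr2.2} bounds $\int|u|^2\log|u|^2$ only from \emph{above}, whereas the quantity that can blow up here is $\int_{\{|u|<1\}}|u|^2|\log|u|^2|$, which the logarithmic Sobolev inequality does not see.

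The correct logic (and the content of the cited Corollary 2.6) runs in the opposite direction: one does not prove differentiability first and then deduce the equivalence, but rather uses the Palais--Smale inequality itself to show that $-\Phi_\varepsilon'(u_n)$ lies in the convex subdifferential $\partial\Psi(u_n)$ up to an error of norm $\sigma_n$, then identifies every element of $\partial\Psi(u_n)$ with the pointwise derivative $G_2'(\cdot,u_n)$ and shows $G_2'(\cdot,u_n)u_n\in L^1$; only \emph{then} does $w\mapsto\int G_2'(x,u_n)w$ become a bounded functional, with norm at most $\sigma_n+\|\Phi_\varepsilon'(u_n)\|$. So the boundedness of $J_\varepsilon'(u_n)$ is a \emph{consequence} of $u_n$ being a Palais--Smale point, not a property of arbitrary points of $D(J_\varepsilon)$. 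Your converse direction is essentially fine, because there the hypothesis ``$J_\varepsilon'(u_n)\to0$ in the dual'' already presupposes that $G_2'(\cdot,u_n)$ defines a bounded functional, and the pointwise convexity inequality $G_2(x,v)-G_2(x,u_n)\ge G_2'(x,u_n)(v-u_n)$ can then be integrated. As written, however, the forward direction has a genuine gap that cannot be repaired by dominated convergence alone.
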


To use Theorem 3.2 in \cite{A.Szulkin-1986}, we need to prove that $J_{\varepsilon}$ satisfies the $(PS)_c$ condition $(iii)$ above.
\begin{proposition}\label{pr2.2} $J_{\varepsilon}$ satisfies $(PS)_c$ condition, i.e., each sequence $(u_n)\subset \mathcal{D}^s_{V,\varepsilon}(\R^N)$ with $\lim\limits_{n\to\infty}J_{\varepsilon}(u_n)\to c$
has a convergent subsequence in $\mathcal{D}^s_{V,\varepsilon}(\R^N)$.
\end{proposition}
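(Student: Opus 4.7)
My plan is to follow the standard Szulkin-style Palais-Smale argument, adapted to the logarithmic nonlinearity and the spatial penalization. Let $(u_n)\subset\mathcal{D}^s_{V,\varepsilon}(\R^N)$ be a PS sequence at level $c$. By Proposition~\ref{Adpr2.4} this is equivalent to $J'_\varepsilon(u_n)\to 0$ in the dual, and the piecewise construction of $G_2$ actually makes $\mathcal{G}^2\in C^1$ (its derivative is continuous across the switching value $s=e^{-3/2}$), so classical $C^1$-computations with $\langle J'_\varepsilon(u_n),u_n\rangle$ are legitimate.

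For boundedness I would first compute the combination $2J_\varepsilon(u_n)-\langle J'_\varepsilon(u_n),u_n\rangle$. The pointwise identity $u\cdot u(1+\log u^2)-u^2\log u^2=u^2$ on $\Lambda$ and the calculation
\[
2G_2(x,s)-G_2'(x,s)s=
\begin{cases}
s_+^2, & 0\le s\le e^{-3/2},\\
e^{-3}, & s>e^{-3/2},
\end{cases}
\]
reduce this to
\[
\int_\Lambda u_n^2+\int_{\R^N\setminus\Lambda}\bigl(2G_2(x,u_n)-G_2'(x,u_n)u_n\bigr)\le 2c+1+o(\|u_n\|_{V,\varepsilon}),
\]
so $\|u_n\|_{L^2(\Lambda)}$ and $|\{u_n>e^{-3/2}\}\setminus\Lambda|$ are both bounded up to an $o(\|u_n\|_{V,\varepsilon})$ error. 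To upgrade this to a bound on $\|u_n\|_{V,\varepsilon}$ itself I would insert the fractional logarithmic Sobolev inequality (Proposition~\ref{Adpr2.2}) with $a^2/\pi^s<\varepsilon^{2s}/4$, splitting $u_n^2\log u_n^2=(u_n^2\log u_n^2)_+-(u_n^2\log u_n^2)_-$ on $\Lambda$ (the negative part being dominated pointwise by $e^{-1}$) and absorbing the kinetic contribution into $\tfrac14\|u_n\|_{V,\varepsilon}^2$; the residual entropy factor $\|u_n\|_2^2\log\|u_n\|_2^2$ is then controlled by $\int_U(V+1)u_n^2\ge\lambda\|u_n\|_{L^2(U)}^2$ on $U$ via $(\mathcal{V}_2)$, and by the fractional Hardy inequality~\eqref{Adeq1.4} outside $U$. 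Substituting back into $J_\varepsilon(u_n)\le c+1$ yields $\|u_n\|_{V,\varepsilon}^2\le C+\mu\|u_n\|_{V,\varepsilon}^2$ with $\mu<1$ and thus boundedness.

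For strong convergence, after extracting $u_n\wto u$ in $\mathcal{D}^s_{V,\varepsilon}(\R^N)$ (so that $u_n\to u$ a.e.\ and in $L^q_{\mathrm{loc}}(\R^N)$ for every $q\in[1,2^*_s)$), I would test the Szulkin inequality from Definition~\ref{de2.1}(ii),
\[
\langle\Phi'_\varepsilon(u_n),u-u_n\rangle+\Psi(u)-\Psi(u_n)\ge-\sigma_n\|u-u_n\|,
\]
with $v=u$. The logarithmic contribution $\int_\Lambda u_n(1+\log u_n^2)(u-u_n)\to 0$ then follows from Vitali's theorem: the bound $|s(1+\log s^2)|\le C(|s|+|s|^{1+\delta}+|s|^{1-\delta})$ for any small $\delta>0$, combined with the Gagliardo-Nirenberg estimate on $\|u_n\|_{L^{2+\delta}(\Lambda)}$ (Proposition~\ref{wpr2.1}), supplies the uniform integrability on the bounded set $\Lambda$. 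Weak lower semicontinuity of $\Psi$ (Remark~\ref{re2.3}) and of the norm then force $\|u_n\|_{V,\varepsilon}\to\|u\|_{V,\varepsilon}$, and strong convergence in the Hilbert space $\mathcal{D}^s_{V,\varepsilon}(\R^N)$ follows.

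The hard part will be closing the boundedness step, because the logarithmic growth is supercritical with respect to any pure power $L^p$-estimate (so the fractional log-Sobolev inequality is indispensable), but log-Sobolev in turn manufactures an entropy factor $\|u_n\|_2^2\log\|u_n\|_2^2$ which no naive Poincar\'e-type argument can absorb, since $V+1$ may vanish outside $U$. The fractional Hardy inequality is precisely what supplies the missing $L^2$-information at infinity, and the delicate balance between log-Sobolev on $\Lambda$, Hardy on $\R^N\setminus U$, and the $(V+1)$-weight on $U$ is what makes the coercivity uniform in $n$.
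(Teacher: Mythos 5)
Your strong-convergence step and your computation of $2J_{\varepsilon}(u_n)-\langle J'_{\varepsilon}(u_n),u_n\rangle$ (giving $2G_2(x,s)-G_2'(x,s)s=s_+^2$ for $s\le e^{-3/2}$ and $=e^{-3}$ beyond) are sound and in the same spirit as the paper, which tests with $u_n$ and $u$ and uses weak lower semicontinuity/Fatou exactly as you do. But the boundedness step contains two genuine errors. First, $\mathcal{G}^2$ is \emph{not} $C^1$ on $\mathcal{D}^s_{V,\varepsilon}(\R^N)$: continuity of $s\mapsto G_2'(x,s)$ across $s=e^{-3/2}$ is beside the point. Since $G_2(x,s)=-\tfrac12 s^2\log s^2$ for $0\le s\le e^{-3/2}$ is slightly supercritical with respect to $L^2$ and $V+1$ may vanish identically outside $U$, the functional $\mathcal{G}^2$ takes the value $+\infty$ at some points of the space; this is precisely why the paper stays in Szulkin's framework (Remark \ref{re2.3}) and justifies pairings via Proposition \ref{Adpr2.4} rather than by $C^1$ smoothness.

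The fatal gap is the coercivity closure. (a) Proposition \ref{Adpr2.2} is an inequality for functions in $H^s(\R^N)$, but elements of $\mathcal{D}^s_{V,\varepsilon}(\R^N)$ need not lie in $L^2(\R^N)$ at all, and the fractional Hardy inequality \eqref{Adeq1.4} does \emph{not} supply ``$L^2$-information at infinity'': it only bounds $\int_{\R^N}u_n^2|x|^{-2s}$, which is strictly weaker than $\|u_n\|^2_{L^2(\R^N\setminus U)}$, so your control of the entropy factor outside $U$ is unfounded. (b) Even if one had $\|u_n\|_2^2\le C\|u_n\|_{V,\varepsilon}^2$, the entropy term $\|u_n\|_2^2\log\|u_n\|_2^2$ is superquadratic in $\|u_n\|_{V,\varepsilon}$, so it can never be absorbed as $\mu\|u_n\|_{V,\varepsilon}^2$ with a fixed $\mu<1$; your claimed inequality $\|u_n\|^2_{V,\varepsilon}\le C+\mu\|u_n\|^2_{V,\varepsilon}$ does not follow. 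The paper's route avoids both problems: it applies the log-Sobolev inequality to the compactly supported function $\eta u_n$ with $\eta\in C^{\infty}_c(\Lambda^{\delta})$, proves the nonlocal cutoff estimate $\varepsilon^{2s}\|(-\Delta)^{s/2}(\eta u_n)\|_2^2\le C\|u_n\|^2_{V,\varepsilon}$ (estimate \eqref{Adeq2.5}, where Hardy is genuinely used to handle the cross term involving $u_n$ far from $\Lambda^{\delta}$ — a step entirely absent from your plan), and, crucially, feeds the PS-identity bound \eqref{AAdeq2.4}, $\|u_n\|_2^2\le C+o_n(1)\|u_n\|_{V,\varepsilon}$, into the entropy factor so that it grows like $1+\|u_n\|^{1+\delta}_{V,\varepsilon}$, i.e.\ subquadratically. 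You actually derived a localized version of that $L^2$ bound from $2J_{\varepsilon}-\langle J'_{\varepsilon},\cdot\rangle$ (on $\Lambda$ and on the sublevel set $\{u_n\le e^{-3/2}\}$, plus a measure bound on the superlevel set), but you never use it where it is needed, and by itself it does not yet control $\int_{\{u_n>e^{-3/2}\}}u_n^2$ near $\Lambda^{\delta}$ without an extra interpolation argument. As written, the boundedness of the PS sequence — the heart of the proposition — does not close.
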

\begin{proof}
We first show that $(u_n)$ is bounded in $\mathcal{D}^s_{V,\varepsilon}(\R^N)$. Observing firstly that $\Psi\ge 0\ \forall u\in\mathcal{D}^s_{V,\varepsilon}(\R^N)$, hence we have
\begin{equation}\label{SSEQ2.2}
J_{\varepsilon}(u_n)\ge \Phi_{\varepsilon}(u_n) = \frac{1}{2}\|u_n\|^2_{V,\varepsilon} - \frac{1}{2}\int_{\Lambda}|u_n|^2\log|u_n|^2.
\end{equation}
Moreover, by \eqref{AAAeq1}, Proposition \ref{Adpr2.4} and the fact that $\int\max\{f,g\}\ge \max\{\int f,\int g\}$, it holds
\begin{equation}\label{AAdeq2.4}
\int_{\R^N}|u_n|^2\mathrm{d}x\le 2J_{\varepsilon}(u_n) - \langle J'_{\varepsilon}(u_n),u_n\rangle\le C + o_n(1)\|u_n\|_{V,\varepsilon}.
\end{equation}

For a set $A\in\R^N$, we define $A^{d}:=\{x\in\R^N:dist(A,x)<d\}$.
Let $\eta\in C^{\infty}_c(\Lambda^{\delta})$ be a function satisfying $0\le \eta\le 1$ and $\eta\equiv 1$ on $\overline{\Lambda}$, where the $\delta$ is a small parameter such that $\Lambda^{2\delta}\subset\subset U$. Defining $v_n(x) = u_n(\varepsilon x)\eta(\varepsilon x)$, we have $v_n(x)\in H^s(\R^N)\backslash\{0\}$. Then, by Proposition \ref{Adpr2.2}, we have
\begin{align*}
\int_{\R^N}|v_n|^2\log|v_n|^2 \le \frac{a^s}{\pi}\|(-\Delta)^{s/2}v_n\|^2 + (C_{s,N} + \frac{N}{s}\log a)\|v_n\|^2_2 + \|v_n\|^2_2\log \|v_n\|^2_2.
\end{align*}
Resclaing back, we find

\begin{eqnarray}\label{Adeq2.4}
\begin{split}
&\quad\int_{\R^N}|\eta u_n|^2\log\|\eta u_n\|^2\\
&\le \frac{a\varepsilon^{2s}}{2}\|(-\Delta)^{s/2}(\eta u_n)\|^2_2 + (C_{s,N} + \frac{N}{s}\log a)\|\eta u_n\|_2^2+ \|\eta u_n\|^2_2 \log \Big(\frac{1}{\varepsilon^N}\|\eta u_n\|^2_2\Big).
\end{split}
\end{eqnarray}
By fractional Hardy inequality \eqref{Adeq1.4} and some delicate nonlocal estimates, we will prove in Appendix A that
\begin{equation}\label{Adeq2.5}
T_1(\eta):=\varepsilon^{2s}\|(-\Delta)^{s/2}(\eta u_n)\|^2_2\le C\|u_n\|^2_{V,\varepsilon},
\end{equation}
where $C$ is a positive constant. Hence, returning back to \eqref{Adeq2.4}, letting $a>0$ be small enough, by \eqref{AAdeq2.4}, we get
\begin{eqnarray}\label{Adeq2.6}
\begin{split}
&\quad\int_{\R^N}|\eta u_n|^2\log\|\eta u_n\|^2\le \frac{1}{2}\|u_n\|^2_{V,\varepsilon} + C(1 + \|\eta u_n\|_2^{2(1 + \delta)}) -N\|u_n\|^2_2\log\varepsilon\\
&\le \frac{1}{2}\|u_n\|^2_{V,\varepsilon} + C(1 + \|u_n\|^{1 + \delta}_{V,\varepsilon}) - N\big(C + \|u_n\|_{V,\varepsilon}\big)\log\varepsilon,
\end{split}
\end{eqnarray}
where $\delta\in(0,1)$ is a parameter.
Note that for every $f:\R^N\to \R$, it holds
$$
\int_{U\backslash\Lambda}|f|^2\log |f|^2\ge -\frac{1}{e}|U\backslash\Lambda|.
$$
Then by \eqref{Adeq2.6} and \eqref{SSEQ2.2}, we have
$$
\widetilde{C}\ge \frac{1}{2}\|u_n\|^2_{V,\varepsilon} - \|u_n\|^{1 + \delta}_{V,\varepsilon} + N(C + \|u_n\|_{V,\varepsilon})\log\varepsilon,
$$
where $\widetilde{C}$ is a positive constant. Consequently, since $0<\delta<1$, we can conclude that $(u_n)$ is bounded in $\mathcal{D}^s_{V,\varepsilon}(\R^N)$.
 Going if necessary to a subsequence, we assume that
 $$
 u_n\rightharpoonup u\ \text{weakly in}\ \mathcal{D}^s_{V,\varepsilon}(\R^N).
 $$

 Next we show that $(u_n)$ has a convergent subsequence. Since $u_n\rightharpoonup u\in \mathcal{D}^s_{V,\varepsilon}(\R^N)$, by the boundedness of $\Lambda$ and the fact that $|G_1(x,s)|\le C_1|t|^{\frac{3}{2}} + C_2|t|^{\frac{5}{2}}$, where $C_1,C_2>0$ are two positive constants, we have
 $$
 {G}_1'(x,u_n)u_n\to {G}_1'(x,u)u.
 $$
Noting that since $J'_{\varepsilon}(u_n)\varphi = o_n(1)\|\varphi\|_{V,\varepsilon}$ for all $\varphi\in C^{\infty}_c(\R^N)$, we deduce that $J'_{\varepsilon}(u)\varphi = 0$ for all $\varphi\in C^{\infty}_c(\R^N)$, and so, $J'_{\varepsilon}(u)u=0$. Combing with $|J'_{\varepsilon}(u_n)u_n|\le \|J'_{\varepsilon}(u_n)\|_{\mathcal{D}'_{V,\varepsilon}}\|u_n\|_{V,\varepsilon} =  o_n(1)\|u_n\|$, we have
\begin{equation}\label{Adeq2.8}
\|u_n\|^2_{V,\varepsilon} + \int_{\R^N}\chi_{\R^N\backslash\Lambda}G_2'(x,u_n)u_n = \|u\|^2_{V,\varepsilon} + \int_{\R^N}\chi_{\R^N\backslash\Lambda}G_2'(x,u)u + o_n(1).
\end{equation}
Finally, by $\|u\|\le \liminf\limits_{n\to\infty}\|u_n\|^2_{V,\varepsilon}$ and the l.s.c property(by Fatou's Lemma), we get that $u_n\to u$ strongly in $\mathcal{D}^s_{V,\varepsilon}(\R^N)$. This completes the proof.
\end{proof}

\begin{remark}
The proof of \eqref{Adeq2.5} is trivial if $s=1$, but delicate if $0<s<1$. It will need the global $L^2$ information of $u_n$, which is given by the nonlocal operator $(-\Delta)^s$ and the fractional Hardy inequality \eqref{Adeq1.4}(see \eqref{eqA.2} for example).
\end{remark}
Obviously, there exists $\rho>0$ such that
$$
J_{\varepsilon}(u)\ge \Phi_{\varepsilon}(u)\ge \frac{1}{2}\|u\|^2_{V,\varepsilon} - C\|u\|^p_{V,\varepsilon}>0\ \text{for all}\ u\ \text{with}\ \|u\|^2_{V,\varepsilon} = \rho
$$
and for each $u\in C^{\infty}_c(\Lambda)\backslash\{0\}$, it holds
$$
J_{\varepsilon}(su)\to-\infty\ \text{as}\ s\to+\infty,
$$
i.e., $J_{\varepsilon}$ owns mountain pass geometry. Thus by Proposition \ref{pr2.2} and Theorem 3.2 in \cite{A.Szulkin-1986}, we immediately have:
\begin{lemma}\label{le2.3}
The mountain pass value
$$
c_{\varepsilon}=\inf_{\gamma\in\Gamma}\max_{t\in[0,1]}J_{\varepsilon}(\gamma(t))
$$
is positive and can be achieved by a positive function $u_{\varepsilon}$ which is a critical point of $J_{\varepsilon}$ and solves the following penalized problem
\begin{equation}\label{eq2.1}
\varepsilon^{2s}(-\Delta)^s u_{\varepsilon} + (V(x) + 1)u_{\varepsilon} = G_1'(x,(u_{\varepsilon})_+) - G_2'(x,(u_{\varepsilon})_+).
\end{equation}
\end{lemma}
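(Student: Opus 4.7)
The plan is to verify the hypotheses of Szulkin's Mountain Pass Theorem 3.2 from \cite{A.Szulkin-1986} for the functional $J_\varepsilon = \Phi_\varepsilon + \Psi$ and then to extract both the sign and the Euler--Lagrange equation from the resulting critical point. The geometric conditions are essentially indicated just before the statement: on a small sphere $\|u\|_{V,\varepsilon}=\rho$, the quadratic term dominates $\mathcal{G}^1(u)$ via Proposition \ref{wpr2.1} (choosing any $q\in(2,2^*_s)$ to absorb the logarithm), so $J_\varepsilon(u)\ge \alpha>0$; and for any fixed $\varphi\in C^\infty_c(\Lambda)\setminus\{0\}$ one has $\Psi(s\varphi)\equiv 0$ while the $s^2\log s^2$ growth of $\mathcal{G}^1(s\varphi)$ drives $J_\varepsilon(s\varphi)\to -\infty$ as $s\to+\infty$. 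Combined with Proposition \ref{pr2.2} (the $(PS)_c$ condition) and the decomposition recalled in Remark \ref{re2.3}, Szulkin's theorem yields a nontrivial critical point $u_\varepsilon\in \mathcal{D}^s_{V,\varepsilon}(\R^N)$ with $J_\varepsilon(u_\varepsilon)=c_\varepsilon>0$, meaning
\[
\langle \Phi_\varepsilon'(u_\varepsilon), v - u_\varepsilon\rangle + \Psi(v) - \Psi(u_\varepsilon) \ge 0 \qquad \forall\, v\in\mathcal{D}^s_{V,\varepsilon}(\R^N).
\]

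For positivity of $u_\varepsilon$, I would test this inequality with $v=(u_\varepsilon)_+$. Since both $G_1(x,\cdot)$ and $G_2(x,\cdot)$ depend only on the positive part by the definition \eqref{AAAeq1}, one has $\Psi((u_\varepsilon)_+)=\Psi(u_\varepsilon)$ and $G_1'(x,u_\varepsilon)(u_\varepsilon)_-\equiv 0$ pointwise. The bilinear form representation of $(-\Delta)^{s/2}$ immediately yields the nonlocal sign fact
\[
\int_{\R^N}(-\Delta)^{s/2}(u_\varepsilon)_+\cdot(-\Delta)^{s/2}(u_\varepsilon)_-\,\mathrm{d}x \le 0,
\]
so $\int (-\Delta)^{s/2}u_\varepsilon\cdot(-\Delta)^{s/2}(u_\varepsilon)_- \le -\|(-\Delta)^{s/2}(u_\varepsilon)_-\|_2^2$, and combined with $u_\varepsilon(u_\varepsilon)_-=-(u_\varepsilon)_-^2$ the critical point inequality collapses to
\[
\varepsilon^{2s}\|(-\Delta)^{s/2}(u_\varepsilon)_-\|_2^2 + \int_{\R^N}(V+1)(u_\varepsilon)_-^2\,\mathrm{d}x \le 0.
\]
Since $V+1\ge\lambda>0$ on $\Lambda$ by $(\mathcal{V}_2)$, this forces $(u_\varepsilon)_-\equiv 0$, so $u_\varepsilon\ge 0$.

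To show that $u_\varepsilon$ weakly solves \eqref{eq2.1}, I substitute $v=u_\varepsilon+t\varphi$ for $\varphi\in C^\infty_c(\R^N)$ and $t>0$ into the critical point inequality, divide by $t$, and let $t\to 0^+$. Since $G_2(x,\cdot)$ is convex and $C^1$ in its second variable, the difference quotients $t^{-1}\bigl(G_2(x,u_\varepsilon+t\varphi)-G_2(x,u_\varepsilon)\bigr)$ are monotone in $t$ and converge pointwise to $G_2'(x,u_\varepsilon)\varphi$, so monotone convergence gives $\langle \Phi_\varepsilon'(u_\varepsilon),\varphi\rangle + \int G_2'(x,u_\varepsilon)\varphi\,\mathrm{d}x\ge 0$; replacing $\varphi$ by $-\varphi$ produces equality, which is precisely the weak form of \eqref{eq2.1}. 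Finally, strict positivity follows from local H\"older regularity for fractional Schr\"odinger equations together with the pointwise nonlocal maximum principle: at any zero $x_0$ of $u_\varepsilon$ one would have $(-\Delta)^s u_\varepsilon(x_0)=-2\int u_\varepsilon(y)|x_0-y|^{-N-2s}\,\mathrm{d}y<0$, contradicting \eqref{eq2.1} (whose right-hand side vanishes at $x_0$, using $0\log 0=0$ and $G_2'(x_0,0)=0$). The main technical obstacle is the positivity step: unlike the local setting, $(-\Delta)^{s/2}$ does not split orthogonally between $u_+$ and $u_-$, and one must exploit the favorable sign of the cross bilinear term to compensate; the weighted $L^2$ contribution on $\Lambda$, where $V+1\ge\lambda>0$, then closes the argument.
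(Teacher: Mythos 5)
Your proposal is correct and follows essentially the same route as the paper: Szulkin's Theorem 3.2 combined with the mountain-pass geometry and the $(PS)_c$ condition of Proposition \ref{pr2.2}, the convexity of $\Psi$ and the limit of difference quotients with $v=u_\varepsilon+t\varphi$ (then $\varphi\mapsto-\varphi$) to obtain the weak form of \eqref{eq2.1}, and a negative-part argument plus regularity and the fractional strong maximum principle for positivity. The only cosmetic differences are that you test the variational inequality with $v=(u_\varepsilon)_+$ before deriving the equation, whereas the paper first derives \eqref{eq2.1} and then tests it with $(u_\varepsilon)_-$, and that you write out the maximum-principle step which the paper delegates to \cite[Appendix D]{20}.
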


\begin{proof}
By Theorem 3.2 in \cite{A.Szulkin-1986}, $c_{\varepsilon}$ is a critical value, i.e., there exists $u_{\varepsilon}\in D(J_{\varepsilon})=\{v\in \mathcal{D}^s_{V,\varepsilon}:J_{\varepsilon}(v)<+\infty\}$ with $J_{\varepsilon}(u_{\varepsilon}) = c_{\varepsilon}$ such that
$$
\langle \Phi'_{\varepsilon}(u_{\varepsilon}),v-u_{\varepsilon}\rangle + \Psi(v) - \Psi(u_{\varepsilon})\ge 0\ \ \forall v\in \mathcal{D}^s_{V,\varepsilon}.
$$
In particular, letting $t>0$ and $v = u_{\varepsilon} + t\varphi$ with $\varphi\in C^{\infty}_c(\R^N)$, we have
$$
\langle \Phi'_{\varepsilon}(u_{\varepsilon}),\varphi\rangle + \frac{\Psi(u_{\varepsilon} + t\varphi) - \Psi(u_{\varepsilon})}{t}\ge 0\ \ \forall t>0
$$
and then
$$
\langle J'_{\varepsilon}(u_{\varepsilon}),\varphi\rangle = \langle \Phi'_{\varepsilon}(u_{\varepsilon}),\varphi\rangle + \int_{\R^N}G_2'(x,u_{\varepsilon})\varphi\ge 0.
$$
Rearranging $\varphi = -\psi$, we eventually have
$$
\langle J'_{\varepsilon}(u_{\varepsilon}),\varphi\rangle = 0\ \forall \varphi\in C^{\infty}_c(\R^N),
$$
which implies \eqref{eq2.1}.

Finally, letting $(u_{\varepsilon})_-$ be a test function to \eqref{eq2.1}, we find $u_{\varepsilon}\ge 0$. By the standard regularity assertion and maximum principle in \cite[Appendix D]{20}, we conclude that $u_{\varepsilon}$ is positive.
\end{proof}

\vspace{0.5cm}

\section{Concentration and the origin problem}\label{s3}

In this section we will prove the concentration phenomenon of $u_{\varepsilon}$ via energy comparison, by which we will prove at the last of this section that
\begin{equation}\label{Adeq3.1}
-(1 + \log|u_{\varepsilon}|^2)\ge 2\ \ \forall x\in\R^N\backslash\Lambda,
\end{equation}
which and \eqref{eq2.1} indicate that $u_{\varepsilon}$ solves the origin problem \eqref{eq1.1}.

\subsection{Concentration}
In the first subsection, we prove the concentration of $u_{\varepsilon}$ via comparing the energy $c_{\varepsilon}$ with the least energy of the limiting problem of \eqref{eq1.1}.

The limiting problem corresponding to \eqref{eq1.1} is
\begin{equation}\label{eq3.1}
(-\Delta)^s u + \lambda u = u\log |u|^2,
\end{equation}
where $\lambda>-1$. Its Euler-Lagrange functional is
$$
\mathcal{L}_{\lambda}(u) = \frac{1}{2}\int_{\R^N}|(-\Delta)^{s/2}u|^2 + (\lambda + 1)|u|^2 - \frac{1}{2}\int_{\R^N}u^2\log u^2.
$$
In \cite{A.H.Ardila-NA-2017}, it was proved that the limiting problem \eqref{eq3.1} has a least energy solution $U_{\lambda}$ with
$$
\mathcal{L}_{\lambda}(U_{\lambda}) = \mathcal{C}_{\lambda} := \inf_{\varphi\in H^s(\R^N)\backslash\{0\}}\max_{t>0}\mathcal{L}_{\lambda}(t\varphi)=\inf_{\varphi\in C^{\infty}_c(\R^N)\backslash\{0\},\varphi\ge0}\max_{t>0}\mathcal{L}_{\lambda}(t\varphi).
$$
For $\mathcal{C}_{\lambda}$, we have
\begin{proposition}\label{pr3.1}
the function $\mathcal{C}_{\cdot}:(-1,+\infty)\to (0,+\infty)$ is continuous and increasing.
\end{proposition}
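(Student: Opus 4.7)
The plan is to exploit two ingredients: the linear dependence of $\mathcal{L}_\lambda$ on $\lambda$ through the mass term, encoded by
\begin{equation*}
\mathcal{L}_{\mu}(v) - \mathcal{L}_{\lambda}(v) = \tfrac{1}{2}(\mu - \lambda)\|v\|_2^2, \qquad \forall\, v\in H^s(\R^N),
\end{equation*}
and an explicit Nehari-type identity for the least-energy level. Indeed, testing the limiting equation $(-\Delta)^s U_\lambda + \lambda U_\lambda = U_\lambda\log U_\lambda^2$ against $U_\lambda$ gives $\|(-\Delta)^{s/2}U_\lambda\|_2^2 + \lambda\|U_\lambda\|_2^2 = \int U_\lambda^2\log U_\lambda^2$, and substituting this into $\mathcal{L}_\lambda(U_\lambda) = \mathcal{C}_\lambda$ collapses to the clean formula
\begin{equation*}
\mathcal{C}_\lambda = \tfrac{1}{2}\|U_\lambda\|_2^2.
\end{equation*}
A short computation also shows that, for any fixed $\varphi\neq 0$, the function $t\mapsto \mathcal{L}_\lambda(t\varphi)$ has a unique positive maximizer $t^\ast(\varphi,\lambda)$ determined by $\log\bigl((t^\ast)^2\bigr)\|\varphi\|_2^2 = \|(-\Delta)^{s/2}\varphi\|_2^2 + \lambda\|\varphi\|_2^2 - \int \varphi^2\log\varphi^2$, and that $\mathcal{L}_\lambda(t\varphi)\to-\infty$ as $t\to\infty$ (from the $-t^2\log(t^2)$ term); in particular every ``max over $t$'' below is finite and attained.

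For strict monotonicity I pick $-1 < \lambda_1 < \lambda_2$ and let $t_1 > 0$ realize the maximum of $t\mapsto \mathcal{L}_{\lambda_1}(tU_{\lambda_2})$. Then
\begin{equation*}
\mathcal{C}_{\lambda_1} \le \mathcal{L}_{\lambda_1}(t_1 U_{\lambda_2}) = \mathcal{L}_{\lambda_2}(t_1 U_{\lambda_2}) - \tfrac{1}{2}(\lambda_2 - \lambda_1)t_1^2\|U_{\lambda_2}\|_2^2 \le \mathcal{C}_{\lambda_2} - \tfrac{1}{2}(\lambda_2 - \lambda_1)t_1^2\|U_{\lambda_2}\|_2^2 < \mathcal{C}_{\lambda_2},
\end{equation*}
which delivers both $\mathcal{C}_{\lambda_1} < \mathcal{C}_{\lambda_2}$ and, since $\mathcal{C}_\lambda = \tfrac12\|U_\lambda\|_2^2 > 0$, the positivity on $(-1,+\infty)$.

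For continuity I fix $\lambda_0$ and $\lambda_n\to\lambda_0$. Upper semicontinuity comes from testing against the fixed $U_{\lambda_0}$: the map $\lambda\mapsto \max_{t>0}\mathcal{L}_\lambda(tU_{\lambda_0})$ is continuous because its unique maximizer $t^\ast(U_{\lambda_0},\lambda)$ depends continuously on $\lambda$ through the algebraic formula above, so $\limsup_n \mathcal{C}_{\lambda_n} \le \mathcal{C}_{\lambda_0}$. Lower semicontinuity uses the minimizers $U_{\lambda_n}$. By monotonicity $\mathcal{C}_{\lambda_n}$ stays bounded, so $\|U_{\lambda_n}\|_2^2 = 2\mathcal{C}_{\lambda_n}$ is uniformly bounded. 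Plugging the PDE for $U_{\lambda_n}$ into the formula for $t^\ast(U_{\lambda_n},\lambda_0)$ gives the explicit value $\log\bigl((t_n^\ast)^2\bigr) = \lambda_0 - \lambda_n$, hence $t_n^\ast\to 1$. Therefore
\begin{equation*}
\mathcal{C}_{\lambda_0} \le \mathcal{L}_{\lambda_0}(t_n^\ast U_{\lambda_n}) = \mathcal{L}_{\lambda_n}(t_n^\ast U_{\lambda_n}) + \tfrac{1}{2}(\lambda_0 - \lambda_n)(t_n^\ast)^2\|U_{\lambda_n}\|_2^2 \le \mathcal{C}_{\lambda_n} + o(1),
\end{equation*}
so $\mathcal{C}_{\lambda_0} \le \liminf_n \mathcal{C}_{\lambda_n}$, and continuity follows.

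The only potential obstacle is the lower semicontinuity: a priori it could fail if $\|U_{\lambda_n}\|_2$ blew up or the rescaling factors $t_n^\ast$ ran off to $0$ or $\infty$. Both are ruled out in one stroke by the two explicit identities $\mathcal{C}_\lambda = \tfrac12\|U_\lambda\|_2^2$ and $t_n^\ast = \exp((\lambda_0-\lambda_n)/2)$, after which everything reduces to bookkeeping on $\mathcal{L}_\mu - \mathcal{L}_\lambda = \tfrac12(\mu-\lambda)\|\cdot\|_2^2$.
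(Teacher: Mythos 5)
Your proof is correct and follows essentially the same route as the paper: both compare $\mathcal{C}_{\lambda}$ and $\mathcal{C}_{\lambda'}$ by testing the mountain-pass characterization with the ground state at the other parameter and exploiting $\mathcal{L}_{\mu}(v)-\mathcal{L}_{\lambda}(v)=\tfrac12(\mu-\lambda)\|v\|_2^2$. Your two explicit identities, $\mathcal{C}_{\lambda}=\tfrac12\|U_{\lambda}\|_2^2$ and $t_n^{\ast}=e^{(\lambda_0-\lambda_n)/2}$, in fact supply the uniform control on $\|U_{\lambda_n}\|_2$ and on the rescaling factors that the paper's continuity step leaves implicit, so your write-up is, if anything, the more complete version of the same argument.
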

\begin{proof}
Let $-1<\lambda<\lambda'<+\infty$ and $U_{\lambda'}$ be the ground solution of equation \eqref{eq3.1} with $\lambda=\lambda'$. An easy analysis shows that the following function $f:\R^+\to \R^+$
$$
f(t)=\mathcal{L}_{\lambda}(tU_{\lambda'}) = \mathcal{L}_{\lambda'}(tU_{\lambda'}) + \frac{t^2}{2}(\lambda - \lambda')\int_{\R^N}|U_{\lambda'}|^2
$$
has a unique maximum point $t'\in(0,+\infty)$, from which we have
\begin{align*}
\mathcal{C}_{\lambda}\le \max_{t>0}f(t) &= \mathcal{L}_{\lambda'}(t'U_{\lambda'}) + \frac{(t')^2}{2}(\lambda - \lambda')\int_{\R^N}|U_{\lambda'}|^2\\
&\le \mathcal{C}_{\lambda'} + \frac{(t')^2}{2}(\lambda - \lambda')\int_{\R^N}|U_{\lambda'}|^2\\
&<\mathcal{C}_{\lambda'}.
\end{align*}
Similarly, it holds
$$
\mathcal{C}_{\lambda'}\le \mathcal{C_{\lambda}} + \frac{\tilde{t}^2}{2}(\lambda' - \lambda)\int_{\R^N}|U_{\lambda}|^2
$$
for some unique $\tilde{t}\in(0,+\infty)$.
Then  $\mathcal{C}_{\lambda}$ is increasing and continuous.
\end{proof}

By the analysis above, we have the following upper bound of $c_{\varepsilon}$.
\begin{proposition}\label{pr3.2}
It holds
$$
\limsup_{\varepsilon\to 0}\frac{c_{\varepsilon}}{\varepsilon^N}\le\min_{x\in\Lambda}\mathcal{C}_{V(x)}.
$$
\end{proposition}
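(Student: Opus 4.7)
The plan is to construct, for each $x_0\in \Lambda$, an explicit admissible path in the mountain pass definition of $c_\varepsilon$ whose image lies entirely inside $\Lambda$. Along such a path the penalization piece $\mathcal{G}^2$ vanishes, and a rescaling reduces $\varepsilon^{-N}J_\varepsilon$ to the limiting functional $\mathcal{L}_{V(x_0)}$. The variational characterization of $\mathcal{C}_{V(x_0)}$ as an infimum of mountain pass values of $\mathcal{L}_{V(x_0)}$ stated just before Proposition~\ref{pr3.1} then closes the estimate.

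Fix $x_0\in \Lambda$ and set $\lambda_0=V(x_0)$; by $(\mathcal V_2)$ we have $\lambda_0>-1$. Given $\delta>0$, pick $\varphi\in C^\infty_c(\R^N)\setminus\{0\}$ with $\varphi\ge 0$ such that $\max_{t>0}\mathcal{L}_{\lambda_0}(t\varphi)<\mathcal{C}_{\lambda_0}+\delta$, using the second characterization recalled before Proposition~\ref{pr3.1}. Define
$\psi_\varepsilon(x)=\varphi\bigl((x-x_0)/\varepsilon\bigr)$.
Since $x_0$ lies in the open set $\Lambda$, for all sufficiently small $\varepsilon$ the support of $\psi_\varepsilon$ is contained in $\Lambda$, so $\mathcal{G}^2(t\psi_\varepsilon)=0$ and $\mathcal{G}^1(t\psi_\varepsilon)=\tfrac12\int_{\R^N}(t\psi_\varepsilon)^2\log(t\psi_\varepsilon)^2\,dx$ for every $t\ge 0$.

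Using the change of variables $y=(x-x_0)/\varepsilon$ together with the scaling identity $(-\Delta)^{s/2}\psi_\varepsilon(x)=\varepsilon^{-s}((-\Delta)^{s/2}\varphi)(y)$, one obtains
\[
\varepsilon^{-N}J_\varepsilon(t\psi_\varepsilon)=\tfrac12\int_{\R^N}|(-\Delta)^{s/2}(t\varphi)|^2\,dy+\tfrac12\int_{\R^N}\bigl(V(x_0+\varepsilon y)+1\bigr)(t\varphi)^2\,dy-\tfrac12\int_{\R^N}(t\varphi)^2\log(t\varphi)^2\,dy.
\]
Because $\varphi$ is bounded with compact support and $V$ is continuous at $x_0$, dominated convergence forces $\varepsilon^{-N}J_\varepsilon(t\psi_\varepsilon)\to\mathcal{L}_{\lambda_0}(t\varphi)$ as $\varepsilon\to 0$, uniformly for $t$ in any bounded subset of $[0,+\infty)$: the log term is jointly continuous in $t$, and the remaining terms are polynomial in $t$ with coefficients converging in $\varepsilon$.

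To close, choose $T>0$ large enough that $\mathcal{L}_{\lambda_0}(T\varphi)<-1$. For $\varepsilon$ small enough, $J_\varepsilon(T\psi_\varepsilon)<0$, so $\gamma(\tau)=\tau T\psi_\varepsilon$ for $\tau\in[0,1]$ is admissible, and
$\varepsilon^{-N}c_\varepsilon\le\max_{\tau\in[0,1]}\varepsilon^{-N}J_\varepsilon(\tau T\psi_\varepsilon)\to\max_{s\in[0,T]}\mathcal{L}_{\lambda_0}(s\varphi)\le\mathcal{C}_{\lambda_0}+\delta$.
Sending $\delta\to 0$ and taking the infimum over $x_0\in\Lambda$ yields the claim; note that the minimum over $\overline\Lambda$ is actually attained inside $\Lambda$, since any boundary minimizer would contradict $\inf_\Lambda(V+1)<\inf_{U\setminus\Lambda}(V+1)$ in $(\mathcal V_2)$. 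The only point requiring real care is the uniform convergence in $t$; here the compact support of $\varphi$ tames every ingredient and the nonlocal operator causes no extra trouble because its scaling identity is exact. Any substantive difficulty in the semiclassical analysis will therefore appear in the matching lower bound for $c_\varepsilon/\varepsilon^N$, not in this upper bound.
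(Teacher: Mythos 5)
Your proposal is correct and follows essentially the same route as the paper: the rescaled bump $\varphi((x-x_0)/\varepsilon)$ supported in $\Lambda$ (so the penalization $\mathcal{G}^2$ vanishes), the ray $\tau\mapsto \tau T\psi_\varepsilon$ as the admissible mountain pass path, the exact scaling of the fractional kinetic, potential and logarithmic terms, and the characterization of $\mathcal{C}_{V(x_0)}$ as an infimum over nonnegative $C^\infty_c$ test functions. You merely spell out details the paper leaves implicit (uniform convergence in $t$, admissibility of the path, attainment of the minimum), so no substantive difference.
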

\begin{proof}
Let $\varphi\in C^{\infty}_c(\R^N)\backslash\{0\},\ \varphi\ge 0$ and define for each $x_0\in\Lambda$
$$
\varphi_{\varepsilon}(x) = \varphi\Big(\frac{x - x_0}{\varepsilon}\Big).
$$
Obviously, supp$\varphi_{\varepsilon}\subset \Lambda$ for small $\varepsilon$ and $\gamma_{\varepsilon}(t) = tT_0\varphi_{\varepsilon}\in \Gamma_{\varepsilon}$ for some $T_0$ large enough. Then we have
\begin{align*}
\frac{c_{\varepsilon}}{\varepsilon^N}\le\frac{\max_{t\in[0,1]}
J_{\varepsilon}(\gamma_{\varepsilon}(t))}{\varepsilon^N}\le \mathcal{L}_{V(x_0)}(t\varphi) + o_{\varepsilon}(1)
\end{align*}
and
$$
\limsup_{\varepsilon\to 0}\frac{c_{\varepsilon}}{\varepsilon^N}\le\inf_{{\varphi\in C^{\infty}_c(\R^N)\backslash\{0\}}\atop{\varphi\ge 0}}\mathcal{L}_{V(x_0)}(t\varphi)=\mathcal{C}_{V(x_0)},
$$
which completes the proof.
\end{proof}

Next, we give the lower bounds of solutions of \eqref{eq2.1}.

%

\begin{proposition}\label{pr3.3}
Let $(u_{\varepsilon_n})$ with $\varepsilon_n>0,\ \varepsilon_n\to 0\ \text{as}\ n\to\infty$ be a family of solutions of $\eqref{eq2.1}$. If for each $k\in \N$, there exists $k$ families of points $\{(x^i_{\varepsilon_n}):1\le i\le k\}$ with $\lim\limits_{n\to\infty}x^i_{\varepsilon_n}=x^i_*$ such that
$$
\liminf_{n\to\infty}\|u_{\varepsilon_n}\|_{L^{\infty}(B_{\varepsilon_n\rho})(x^i_{\varepsilon_n})}>0,\ V(x^i_*)+1>0,\ \ 1\le i\le k,
$$
$$
\liminf_{n\to\infty}\frac{|x^i_{\varepsilon_n} - x^j_{\varepsilon_n}|}{\varepsilon_n} =+\infty,\ \ 1\le i\ne j\le k
$$
and
$$
\limsup_{n\to\infty}\frac{J_{\varepsilon_n}(u_{\varepsilon_n})}{\varepsilon^N_n}<+\infty,
$$
then
$$
\liminf_{n\to\infty}\frac{J_{\varepsilon_n}(u_{\varepsilon_n})}{\varepsilon^N}\ge\sum_{i = 1}^k\mathcal{C}_{V(x^i_*)}.
$$
\end{proposition}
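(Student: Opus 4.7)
The plan is to rescale $u_{\varepsilon_n}$ around each concentration point $x^i_{\varepsilon_n}$, extract a nontrivial weak limit $v^i$ solving the limit equation at level $\lambda=V(x^i_*)$, and then decompose $J_{\varepsilon_n}(u_{\varepsilon_n})$ into the sum of local contributions centered at the different $x^i_{\varepsilon_n}$. Each local contribution will converge to $\mathcal{L}_{V(x^i_*)}(v^i)$, and the lower bound will then follow from the mountain-pass characterization of $\mathcal{C}_\lambda$ in Proposition \ref{pr3.1}.

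Setting $v^i_n(y):=u_{\varepsilon_n}(x^i_{\varepsilon_n}+\varepsilon_n y)$, a direct change of variables gives
\begin{equation*}
(-\Delta)^s v^i_n+(V_n^i(y)+1)v^i_n=\chi_{\Lambda_n^i}(y)v^i_n(1+\log|v^i_n|^2)-\chi_{(\Lambda_n^i)^c}(y)\max\{2v^i_n,-v^i_n(1+\log|v^i_n|^2)\},
\end{equation*}
where $V_n^i(y):=V(x^i_{\varepsilon_n}+\varepsilon_n y)\to V(x^i_*)$ locally uniformly and $\Lambda_n^i:=\varepsilon_n^{-1}(\Lambda-x^i_{\varepsilon_n})$. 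The bound $J_{\varepsilon_n}(u_{\varepsilon_n})\le C\varepsilon_n^N$, combined with the computation used in the proof of Proposition \ref{pr2.2} but now in rescaled coordinates, yields a uniform $H^s(\R^N)$-bound on $v^i_n$, so up to subsequence $v^i_n\wto v^i$ in $H^s$ and $v^i_n\to v^i$ in $L^q_{loc}$ for $q<2^*_s$. Moser iteration applied to the penalized equation provides a uniform $L^\infty$-bound, and together with the hypothesis $\|u_{\varepsilon_n}\|_{L^\infty(B_{\varepsilon_n\rho}(x^i_{\varepsilon_n}))}\ge c>0$ this forces $v^i\not\equiv 0$. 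Passing to the limit, $v^i$ solves the pointwise limit of the rescaled equation; if $x^i_*\notin\overline{\Lambda}$ then $\chi_{\Lambda_n^i}\to 0$ locally and testing the resulting equation against $v^i$ yields $\|(-\Delta)^{s/2}v^i\|_2^2+(V(x^i_*)+3)\|v^i\|_2^2\le 0$, contradicting nontriviality. Hence $x^i_*\in\overline{\Lambda}$ and $v^i$ solves \eqref{eq3.1} with $\lambda=V(x^i_*)$.

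To split the energy, I would pick $R_n\to\infty$ slowly with $R_n\varepsilon_n\le\tfrac13\min_{i\ne j}|x^i_{\varepsilon_n}-x^j_{\varepsilon_n}|$ and smooth cutoffs $\eta^i_n$ equal to $1$ on $B_{R_n\varepsilon_n/2}(x^i_{\varepsilon_n})$, supported in $B_{R_n\varepsilon_n}(x^i_{\varepsilon_n})$, with $\|\nabla\eta^i_n\|_\infty\le C/(R_n\varepsilon_n)$. Pointwise disjointness of supports makes the $L^2$, potential, and logarithmic parts of $J_{\varepsilon_n}$ subadditive on $\{\eta^i_n u_{\varepsilon_n}\}$. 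The main obstacle is the fractional kinetic term, where one has to prove
\begin{equation*}
\varepsilon_n^{2s}\|(-\Delta)^{s/2}u_{\varepsilon_n}\|_2^2\ge\sum_{i=1}^k\varepsilon_n^{2s}\|(-\Delta)^{s/2}(\eta^i_n u_{\varepsilon_n})\|_2^2+o(\varepsilon_n^N).
\end{equation*}
For this I would expand $(\eta^i_n u)(x)-(\eta^i_n u)(y)=\eta^i_n(x)(u(x)-u(y))+u(y)(\eta^i_n(x)-\eta^i_n(y))$ in Gagliardo form, bound the commutator using $\|\nabla\eta^i_n\|_\infty\le C/(R_n\varepsilon_n)$ and the kernel tail $\int_{|z|\ge d}|z|^{-N-2s}\,dz=O(d^{-2s})$, and close the global $L^2$ dependence on $u_{\varepsilon_n}$ via the fractional Hardy inequality \eqref{Adeq1.4} exactly as in the proof of \eqref{Adeq2.5}. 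Combining, $\varepsilon_n^{-N}J_{\varepsilon_n}(u_{\varepsilon_n})\ge\sum_{i=1}^k\varepsilon_n^{-N}J_{\varepsilon_n}(\eta^i_n u_{\varepsilon_n})+o_n(1)$, and a change of variables turns each summand into $\mathcal{L}_{V(x^i_*)}(v^i)+o_n(1)$, the $G_2$-contribution vanishing in the limit since $x^i_*\in\overline{\Lambda}$. As $v^i\not\equiv 0$ solves \eqref{eq3.1} with $\lambda=V(x^i_*)$, one has $\mathcal{L}_{V(x^i_*)}(v^i)\ge\mathcal{C}_{V(x^i_*)}$ and the claim follows. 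The principal difficulty throughout is the nonlocal cross-term estimate in the energy splitting, which has no analogue in the case $s=1$.
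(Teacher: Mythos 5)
Your overall strategy (rescale at each $x^i_{\varepsilon_n}$, extract nontrivial limits, split the energy, conclude via the fiber characterization of $\mathcal{C}_\lambda$) is the paper's, but two steps as written do not hold. First, you only distinguish $x^i_*\notin\overline\Lambda$ from $x^i_*\in\overline\Lambda$ and in the latter case assert that $v^i$ solves \eqref{eq3.1} and that the $G_2$-contribution vanishes in the limit. This is false when $x^i_*\in\partial\Lambda$, which cannot be excluded at this stage (the points produced later in Lemma \ref{le3.4} are only known to lie in $\overline\Lambda$). In that case the rescaled sets $\Lambda^i_n$ converge to a half-space $H$, and the limit equation keeps both the truncated logarithmic term $\chi_{H}v(1+\log|v|^2)$ and the penalized term $\chi_{\R^N\setminus H}\max\{2v,-v(1+\log|v|^2)\}$; $v^i$ is a critical point of the corresponding half-space functional $J^i_*$, not of $\mathcal{L}_{V(x^i_*)}$. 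The paper closes exactly this case by exploiting the design of $G_2$ in \eqref{AAAeq1}: since $\int_0^{s}\max\{4t,-2t(1+\log t^2)\}\,dt\ge -s^2\log s^2$, one has $J^i_*\ge\mathcal{L}_{V(x^i_*)}$ pointwise on functions, hence $J^i_*(v^i)=\max_{t>0}J^i_*(tv^i)\ge\max_{t>0}\mathcal{L}_{V(x^i_*)}(tv^i)\ge\mathcal{C}_{V(x^i_*)}$. Without this (or some proof that concentration points are interior), your lower bound for the local contribution is unjustified.

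Second, the splitting $J_{\varepsilon_n}(u_{\varepsilon_n})\ge\sum_i J_{\varepsilon_n}(\eta^i_n u_{\varepsilon_n})+o(\varepsilon_n^N)$ is not only a matter of the nonlocal cross terms: the logarithmic part is \emph{not} subadditive up to $o(\varepsilon_n^N)$, because $-t^2\log t^2$ is sign-indefinite and you have no a priori smallness of $u_{\varepsilon_n}$ on $\Lambda$ away from the chosen points (statement (iii) of Lemma \ref{le3.4} is a \emph{consequence} of this proposition, so it cannot be invoked here). The leftover term $-\tfrac12\int_{\Lambda\setminus\bigcup_iB_{\varepsilon_nR}(x^i_{\varepsilon_n})}u_{\varepsilon_n}^2\log u_{\varepsilon_n}^2$ can a priori be of order $-C\varepsilon_n^N$ on $\{u_{\varepsilon_n}>1\}$, which destroys the lower bound. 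The paper avoids this by not splitting the functional directly: it tests the penalized equation \eqref{eq2.1} against $\eta_{n,R}u_{\varepsilon_n}$, which converts the outside logarithmic contribution (via the equation and the inequality $\int\max\{f,g\}\ge\max\{\int f,\int g\}$ applied to $G_2$) into nonnegative quantities plus annular errors of size $o_R(1)\varepsilon_n^N$ plus the nonlocal commutator term $T^2_{n,R}$, which is then controlled through the fractional Hardy inequality \eqref{Adeq1.4} as in \eqref{Adeq3.5}. Your treatment of the nonlocal cross terms via commutator estimates and Hardy is in the spirit of that appendix computation, but without the equation-based handling of the outside region and without the half-space argument above, the proof as proposed does not go through.
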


\begin{proof}
Fixing a $1\le i\le k$ and rescaling  the function $u_{\varepsilon_n}$ as $v^i_{n}(x) = u(\varepsilon_n x + x^i_{\varepsilon_n}),\ x\in\R^N$, we have by the estimate in Proposition \ref{pr2.2}  that
$$
\sup_{n}\int_{\R^N}\big(|(-\Delta)^{s/2} v^i_n|^2 + \big(V^i_n(x)+1\big)|v^i_n|^2\big)<+\infty,
$$
where $V^i_n(\cdot)=V(\varepsilon_n \cdot + x^i_{\varepsilon_n})$. Obviously, $v^i_n$ satisfies
\begin{equation}\label{Aeq3.2}
(-\Delta)^s v^i_n + \big(V^i_n(x)+1\big)v^i_n = {G}_1'(\varepsilon_n x + x^i_{\varepsilon_n},v^i_n) -{G}_2'(\varepsilon_n x + x^i_n,v^i_n)\ \ \text{in}\ \R^N.
\end{equation}
 Fixing $R>0$, we have by continuity that
\begin{align*}
&\quad\limsup_{n\to\infty}\|v^i_{n}\|^2_{H^s(B_R)}\\
&\le \limsup_{n\to\infty}(\inf_{\Lambda}(V(x) + 1))^{-1}\int_{\R^N}\big(|(-\Delta)^{s/2} v^i_n|^2 + \big( V^i_n(x)+1\big)|v^i_n|^2\big)<+\infty,
\end{align*}
which says that $(v^i_n)$ is bounded in $H^s_{loc}(\R^N)$ and then by diagonal argument, we can assume without loss of generality that
$
v^i_n\rightharpoonup v^i_*
$
weakly in $H^s_{loc}$ as $n\to \infty$. By
$$
\|v^i_*\|^2_{H^s(B_R)}\le \liminf_{n\to\infty}\|v^i_{n}\|^2_{H^s(B_R)}<+\infty,
$$
we have $v^i_*\in H^s(\R^N)$.

The smoothness of $\Lambda$ implies that the set $\Lambda^i_n=\{x:\varepsilon_n x + x^i_{\varepsilon_n}\in\Lambda\}$ converges to a set $\Lambda^i_*\in\{\emptyset,H,\R^N\}$ as $n\to \infty$, where $H$ is a half plane, by which we have
\begin{align*}
&\quad\int_{\R^N}\Big({G}_1'(\varepsilon x + x^i_{\varepsilon_n},v^i_n)\varphi - {G}_2'(\varepsilon x + x^i_{\varepsilon_n},v^i_n)\varphi\Big)\\
&\to \int_{\R^N}\chi_{\Lambda^i_*}(1 +\log |v^i_*|^2)v^i_*\varphi - \int_{\R^N}\chi_{\R^N\backslash\Lambda^i_*}\max\{2v^i_*,-(1 +\log |v^i_*|^2)v^i_*\}\varphi
\end{align*}
for all $\varphi\in C^{\infty}_c(\R^N)$ as $n\to \infty$.
Then we conclude that $v^i_*$ satisfies the following equation:
\begin{align}\label{eq3.2}
\nonumber&\
\quad(-\Delta)^s v^i_* + (V(x^i_*) + 1)v^i_*\\
&= \chi_{\Lambda^i_*}(x)v^i_*(1+\log|v^i_*|^2)-\chi_{\R^N\backslash\Lambda^i_*}\max\{2v^i_*,-(1 +\log |v^i_*|^2)v^i_*\}\ \ \text{in}\ R^N.
\end{align}
By the similar regularity argument in \cite[Appendix D]{20}, we have
$$
\|v^i_*\|_{L^{\infty}(B_{\rho}(x^i_*))} = \lim_{n\to\infty}|v^i_n\|_{L^{\infty}(B_{\rho}(x^i_n))} = \lim_{n\to\infty}\|u^i_n\|_{L^{\infty}(B_{\varepsilon_n\rho}(x^i_{\varepsilon_n}))}>0,
$$
which implies that $v^i_*$ is nontrivial.

The Euler-Lagrange functional corresponding to \eqref{eq3.2} is
\begin{align*}
J^i_*(u)&=\frac{1}{2}\int_{\R^N}(|(-\Delta)^{s/2}u|^2 + (V(x^i_*) + 1)|u|^2) - \frac{1}{2}\int_{\R^N}\chi_{\Lambda^i_*}|u|^2\log|u|^2\\
 &\quad+ \frac{1}{2}\int_{\R^N\backslash\Lambda^i_*}\mathrm{d}x\int_{0}^{|u(x)|}\max\{4s,-2s(1 + \log s^2)\}ds,
\end{align*}
which implies
\begin{align*}
J^i_*(v^i_*) = \max_{t>0}J^i_*(tv^i_*)\ge \max_{t>0}\mathcal{L}_{V(x^i_*)}(tv^i_*)\ge\mathcal{C}_{V(x^i_*)}.
\end{align*}
Then, after rescaling, we have
\begin{align*}
&\quad \liminf_{n\to\infty}\frac{1}{2\varepsilon^N_n}\int_{B_{\varepsilon_nR}(x^i_{\varepsilon_n})}\Big((
(\varepsilon^{2s}_n|(-\Delta)^{s/2}u_n|^2 + (V(x) + 1)|u_n|^2)\\
&\qquad - \chi_{\Lambda}|u_n|^2\log|u_n|^2+\chi_{\R^N\backslash\Lambda}{G}_2(x,u_n)\Big)\\
&\ge J^i_*(v^i_*) + o_R(1)\ge \mathcal{C}_{V(x^i_*)} + o_R(1).
\end{align*}

Now let us estimate the energy outside $\bigcup_{i=1}^kB_{\varepsilon_n}(x^i_{\varepsilon_n})$.
Choose $\eta$ as another cut-off  function with $\eta_R\equiv0$ in $B_{R}$ and $\eta_R\equiv1$ on $B^c_{2R}$ and define
$$
\eta_{n,R}(\cdot) = \prod_{i=1}^k\eta_R\Big(\frac{\cdot - x^i_{\varepsilon_n}}{\varepsilon_n}\Big).
$$
Testing  \eqref{eq2.1} against with $\eta_{n,R}u_{\varepsilon_n}$, by the definition of penalized function in \eqref{AAAeq1}, 
we find
\begin{align}\label{AAdeq3.5}
\nonumber&\quad\frac{1}{2}\int_{\Big(\bigcup_{i=1}^kB_{\varepsilon_nR}(x^i_{\varepsilon_n})\Big)^c}(\varepsilon^{2s}_n|(-\Delta)^{s/2}u_n|^2 + (V(x) + 1)|u_n|^2)\\
\nonumber&- \frac{1}{2}\int_{\Big(\bigcup_{i=1}^kB_{\varepsilon_nR}(x^i_{\varepsilon_n})\Big)^c\cap\Lambda}
{G}_1(x,u_n)+\frac{1}{2}\int_{\Big(\bigcup_{i=1}^kB_{\varepsilon_nR}(x^i_{\varepsilon_n})\Big)^c\cap\R^N\backslash\Lambda}
{G}_2(x,u_n)\\
\nonumber& := T^2_{n,R}+\frac{1}{2}\int_{{\bigcup_{i=1}^k\Big(B_{2\varepsilon_nR}(x^i_{\varepsilon_n})\backslash B_{\varepsilon_nR}(x^i_{\varepsilon_n})\Big)}}(1-\eta_{n,R})
(\varepsilon^{2s}_n|(-\Delta)^{s/2}u_n|^2 + (V(x) + 1)|u_n|^2)\\
\nonumber&\quad + \frac{1}{2}\int_{\Lambda}\eta_{n,R}|u_n|^2(1+\log|u_n|^2) - \frac{1}{2}{\int_{\R^N}}\chi_{\Big(\bigcup_{i = 1}^kB_{\varepsilon_nR}(x^i_{\varepsilon_n})\Big)^c\cap\Lambda}|u_n|^2\log|u_n|^2\\
\nonumber&\quad-\frac{1}{2}\int_{\R^N\backslash\Lambda}\eta_{n,R}(x)\max\{2|u_n|^2,-|u_n|^2(1+\log|u_n|^2)\}\\
&\quad + \frac{1}{2}\int_{\R^N\backslash\Lambda}\chi_{\Big(\bigcup_{i = 1}^kB_{\varepsilon_nR}(x^i_{\varepsilon_n})\Big)^c}\mathrm{d}x\int_{0}^{|u_n|}\max\{4t_+,-2t_+(1+\log |t_+|^2)\}\\
\nonumber&\ge T^2_{n,R}
 + \frac{1}{2}\int_{\Lambda}\eta_{n,R}|u_n|^2(1+\log|u_n|^2) - \frac{1}{2}{\int_{\Lambda}}\chi_{\Big(\bigcup_{i = 1}^kB_{\varepsilon_nR}(x^i_{\varepsilon_n})\Big)^c}|u_n|^2\log|u_n|^2\\
\nonumber&\quad-\frac{1}{2}\int_{\R^N\backslash\Lambda}\eta_{n,R}(x)\max\{2|u_n|^2,-|u_n|^2(1+\log|u_n|^2)\}\\
\nonumber&\quad + \frac{1}{2}\int_{\R^N\backslash\Lambda}\chi_{\Big(\bigcup_{i = 1}^kB_{\varepsilon_nR}(x^i_{\varepsilon_n})\Big)^c}\max\{2|u_n|^2,-|u_n|^2(1+\log |u_n|^2)\}\mathrm{d}x\\
\nonumber& \ge T^2_{n,R} 
+ \frac{1}{2}\int_{\bigcup_{i=1}^k\Big(B_{2\varepsilon_nR}(x^i_{\varepsilon_n})\backslash B_{\varepsilon_nR}(x^i_{\varepsilon_n})\Big)\cap\Lambda}(\eta_{n,R}-1)|u_n|^2\log|u_n|^2\\
\nonumber&\quad +\frac{1}{2}\int_{{\bigcup_{i=1}^k\Big(B_{2\varepsilon_nR}(x^i_{\varepsilon_n})\backslash B_{\varepsilon_nR}(x^i_{\varepsilon_n})\Big)}\cap\R^N\backslash\Lambda}(\eta_{n,R}-1)\max\{2|u_n|^2,-|u_n|^2(1 + \log|u_n|^2)\},
\end{align}
where
\begin{align*}
&T^2_{n,R}= \frac{\varepsilon^{2s}_n}{2}\int_{\R^N}\mathrm{d}x\int_{\R^N}\frac{(\eta_{n,R}(y)-\eta_{n,R}(x))u_n(y)(u_n(x)-u_n(y))}{|x-y|^{N+2s}}\mathrm{d}y.
\end{align*}
In Appendix A, we will prove by fractional Hardy inequality \eqref{Adeq1.4} that
\begin{equation}\label{Adeq3.5}
\limsup_{n\to\infty}\frac{T^2_{n,R}}{\varepsilon^N_n}\ge o_R(1).
\end{equation}
Hence, by the fact that $v^i_n\to v^i_*$ strongly in $L^q_{loc}(\R^N)$ with $1<q<2^*_s$, we conclude that
\begin{align*}
&\lim_{n\to\infty}\Big(\quad\frac{1}{2\varepsilon^N_n}\int_{\Big(\bigcup_{i=1}^kB_{\varepsilon_nR}(x^i_{\varepsilon_n})\Big)^c}(\varepsilon^{2s}_n|(-\Delta)^{s/2}u_n|^2 + (V(x) + 1)|u_n|^2)\\
&- \frac{1}{2\varepsilon^N_n}\int_{\Big(\bigcup_{i=1}^kB_{\varepsilon_nR}(x^i_{\varepsilon_n})\Big)^c}
{G}_1(x,u_n)+\frac{1}{2\varepsilon^N_n}\int_{\Big(\bigcup_{i=1}^kB_{\varepsilon_nR}(x^i_{\varepsilon_n})\Big)^c}
{G}_2(x,u_n)\Big)\\
&\ge o_R(1) - C\int_{B_{2R}\backslash B_{R}}(|v^i_*| + |v^i_*|^{q})\\
&=o_R(1).
\end{align*}

Finally, by the analysis above, we have
$$
\liminf_{n\to\infty}\frac{J_{\varepsilon_n}(u_{\varepsilon_n})}{\varepsilon^N_n}\ge \sum_{i = 1}^k\mathcal{C}_{V(x^i_*)} + o_R(1),
$$
the conclusion then follows by letting $R\to\infty$.

\end{proof}

\begin{remark}

It is easy to check that
$$
\int\max\{f,g\}\ge \max\{\int f,\int g\},
$$
which and the skillful choice of the truncated function in \eqref{AAAeq1} play a key role in the proof of \eqref{AAdeq3.5}. The estimates \eqref{Adeq3.5} is trivial in the classical case, but delicate under the nonlocal effect of $(-\Delta)^s(0<s<1)$, some skillful global estimates will be involved(e.g., the $L^2$ information of $u_{\varepsilon_n}$ outside $\Lambda$, see \eqref{Adeq3.3} in Appendix for example).
\end{remark}

Now we prove the concentration of $u_{\varepsilon}$.
\begin{lemma}\label{le3.4}
Let $\rho>0$ and $u_{\varepsilon}$ be the penalized solution given by Lemma \ref{le2.3}. There exists a family of points $(x_{\varepsilon})\subset\Lambda$ such that

$(i)$ $\liminf\limits_{\varepsilon\to 0}\|u_{\varepsilon}\|_{L^{\infty}(B_{\varepsilon\rho}(x_{\varepsilon}))}>0$.

$(ii)$ $\lim\limits_{\varepsilon\to 0}V(x_{\varepsilon}) = \inf_{\Lambda} V$.

$(iii)$ $\lim\limits_{{R\to\infty}\atop{\varepsilon\to0}}\|u_{\varepsilon}\|_{L^{\infty}(U\backslash B_{\varepsilon R}(x_{\varepsilon}))}=0$.

\end{lemma}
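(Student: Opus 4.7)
My plan is to prove Lemma~\ref{le3.4} in three steps, exploiting the upper bound on $c_\varepsilon/\varepsilon^N$ from Proposition~\ref{pr3.2}, the lower bound at concentration points from Proposition~\ref{pr3.3}, and the strict monotonicity of $\mathcal{C}_\cdot$ from Proposition~\ref{pr3.1}. Along any subsequence $\varepsilon_n\to 0$ the quotient $c_{\varepsilon_n}/\varepsilon_n^N$ will be squeezed into the interval $[c_0,\,\min_{x\in\Lambda}\mathcal{C}_{V(x)}]$ for some $c_0>0$; the lower bound $c_0$ is produced by evaluating $J_\varepsilon$ on the sphere of mountain-pass radius, after rescaling around an arbitrary interior point of $\Lambda$.

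For (i), I argue by contradiction. Suppose $\|u_\varepsilon\|_{L^\infty(B_{\varepsilon\rho}(y_\varepsilon))}\to 0$ for every family $(y_\varepsilon)\subset\R^N$. The rescaled sequences $v_\varepsilon(\cdot)=u_\varepsilon(\varepsilon\cdot+y_\varepsilon)$ are bounded in $H^s_{\rm loc}(\R^N)$ by the computation inside Proposition~\ref{pr2.2}, so the fractional Lions lemma gives $v_\varepsilon\to 0$ in $L^q_{\rm loc}(\R^N)$ for every $q\in[2,2^*_s)$. Testing \eqref{eq2.1} against $u_\varepsilon$, invoking the rescaled logarithmic Sobolev inequality (Proposition~\ref{Adpr2.2}) together with the global $L^2$ bound obtained inside the proof of Proposition~\ref{pr2.2}, forces $c_\varepsilon/\varepsilon^N\to 0$, contradicting $c_0>0$.

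For (ii), I extract a subsequence $x_\varepsilon\to x_*$ (the case $x_\varepsilon\to\infty$ is absorbed into the testing argument used below). Provided $V(x_*)+1>0$, Proposition~\ref{pr3.3} with $k=1$ yields $\liminf c_\varepsilon/\varepsilon^N\ge\mathcal{C}_{V(x_*)}$; combined with Proposition~\ref{pr3.2} and monotonicity this forces $V(x_*)\le\inf_\Lambda V$, and $(\mathcal{V}_2)$ then excludes $x_*\in U\setminus\Lambda$. The case $x_*\in\R^N\setminus U$ (where $V(x_*)+1>0$ is not a priori known) is excluded directly: here a neighbourhood of $x_*$ lies in $\R^N\setminus\Lambda$, so $\Lambda^1_*=\emptyset$ in \eqref{eq3.2}; testing against $v^1_*\ge 0$ and using $\max\{2v,-v(1+\log v^2)\}\ge 2v$ gives
\[
\|(-\Delta)^{s/2}v^1_*\|_2^2+(V(x_*)+3)\|v^1_*\|_2^2\le 0,
\]
which, together with $V\ge-1$ from $(\mathcal{V}_1)$, forces $v^1_*\equiv 0$ and contradicts nontriviality. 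Hence $x_*\in\overline\Lambda$ and continuity gives (ii). For (iii), if it fails there exist $\eta>0$, $\varepsilon_n\to 0$, $R_n\to\infty$ and $y_n\in U\setminus B_{\varepsilon_nR_n}(x_{\varepsilon_n})$ with $u_{\varepsilon_n}(y_n)\ge\eta$; along a subsequence $y_n\to y_*\in\overline U$, and since $\inf_{\overline U}(V+1)>0$ by $(\mathcal{V}_2)$, Proposition~\ref{pr3.3} applied with $k=2$ to $\{x_{\varepsilon_n},y_n\}$ (using $|x_{\varepsilon_n}-y_n|/\varepsilon_n\to\infty$) yields $\liminf c_{\varepsilon_n}/\varepsilon_n^N\ge\mathcal{C}_{V(x_*)}+\mathcal{C}_{V(y_*)}\ge 2\min_\Lambda\mathcal{C}_V$, using $V(y_*)\ge\inf_\Lambda V$ from $(\mathcal{V}_2)$ and monotonicity. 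This contradicts Proposition~\ref{pr3.2} since $\min_\Lambda\mathcal{C}_V>0$. Local fractional $L^\infty$ regularity from \cite[Appendix D]{20} upgrades this $L^2$-type non-concentration to the $L^\infty$ statement of (iii).

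The main obstacle I anticipate is Step~1: the fractional Lions-type vanishing argument for a logarithmic nonlinearity is delicate because the right-hand side is sign-changing and neither superlinear nor subcritical in the classical sense. Closing the contradiction requires combining the fractional Hardy inequality \eqref{Adeq1.4}, to control the global $L^2$ mass, with the penalization structure outside $\Lambda$, to prevent mass from escaping.
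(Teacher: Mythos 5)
Your parts (ii) and (iii) follow essentially the paper's route: squeeze $c_\varepsilon/\varepsilon^N$ between Proposition \ref{pr3.2} and Proposition \ref{pr3.3} (with $k=1$, resp.\ $k=2$) and use the monotonicity and continuity of $\mathcal{C}_\cdot$ from Proposition \ref{pr3.1}. Your extra case analysis for $x_*\notin U$ (the Liouville-type test of the limit equation with $\Lambda^1_*=\emptyset$) is correct as far as it goes, but the paper never needs it, because it \emph{defines} $x_\varepsilon$ from the outset as a maximum point of $u_\varepsilon$ over $\overline\Lambda$, so every limit point lies in $\overline\Lambda$, where $V+1\ge\lambda>0$. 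Also, your claim that the case $x_{\varepsilon}\to\infty$ is ``absorbed into the testing argument'' is not automatic: the derivation of the limit equation \eqref{eq3.2} uses $V(\varepsilon_n x+x_{\varepsilon_n})\to V(x_*)$, so for an unbounded family you must argue directly with the differential inequality $(-\Delta)^s v_n+(V_n+1)v_n\le -2v_n$ outside $\Lambda$, using only $V+1\ge 0$; this can be done, but it is not written.

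The genuine gap is Step 1, i.e.\ part (i). You reduce it to two ingredients neither of which is established: (a) a uniform lower bound $c_\varepsilon\ge c_0\varepsilon^N$, which is only asserted (the mountain-pass sphere must be rescaled to $\|u\|^2_{V,\varepsilon}\sim\varepsilon^N$, and bounding $\int_\Lambda|u|^{2+\delta}$ by $\varepsilon^{-N\delta/2}\|u\|^{2+\delta}_{V,\varepsilon}$ requires the cut-off/Hardy argument of Appendix A, since $V+1$ may vanish outside $U$); and (b) a fractional Lions-type vanishing step for the logarithmic nonlinearity, which you yourself flag as the main obstacle. Moreover the asserted implication ``vanishing $\Rightarrow c_\varepsilon/\varepsilon^N\to 0$'' is unclear as stated: from $\langle J_\varepsilon'(u_\varepsilon),u_\varepsilon\rangle=0$ one gets $c_\varepsilon=\tfrac12\int_\Lambda u_\varepsilon^2+\int_{\R^N\setminus\Lambda}\big(G_2(x,u_\varepsilon)-\tfrac12 G_2'(x,u_\varepsilon)u_\varepsilon\big)$ with both terms nonnegative, and the exterior term is not controlled by local $L^q$ vanishing plus the global bound $\int_{\R^N}u_\varepsilon^2\lesssim\varepsilon^N$ (near $s=0$ it behaves like $s^2|\log s^2|$ on an unbounded region). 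None of this machinery is needed: the paper's proof of (i) is a one-line threshold argument. Testing \eqref{eq2.1} with $u_\varepsilon$ and using that the $G_2'$-term contributes with a favorable sign gives $0<\|u_\varepsilon\|^2_{V,\varepsilon}\le\int_{\Lambda\cap\{u_\varepsilon>e^{-1/2}\}}u_\varepsilon^2(1+\log u_\varepsilon^2)$, hence $\sup_{\overline\Lambda}u_\varepsilon\ge e^{-1/2}$ uniformly in $\varepsilon$, and choosing $x_\varepsilon$ a maximum point of $u_\varepsilon$ on $\overline\Lambda$ yields (i) at once. In fact, this trick closes your own contradiction argument instantly: your hypothesis (vanishing for every family of centers) forces $\|u_\varepsilon\|_{L^\infty(\R^N)}\to0$, so $1+\log u_\varepsilon^2\le 0$ on $\Lambda$ eventually, whence $\|u_\varepsilon\|^2_{V,\varepsilon}\le0$ and $u_\varepsilon\equiv0$, contradicting $c_\varepsilon>0$ --- no Lions lemma, log-Sobolev inequality, or $c_0\varepsilon^N$ lower bound is required. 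As written, however, your Step 1 is incomplete.
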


\begin{proof}
Easily, we have
\begin{align*}
0&<\int_{\R^N}(\varepsilon^{2s}|(-\Delta)^{s/2}u_{\varepsilon}|^2 + (V(x) + 1)|u_{\varepsilon}|^2)\le \int_{\Lambda\cap\{x:u_{\varepsilon}(x)> e^{-1/2}\}}|u_{\varepsilon}|^2(1 + \log|u_{\varepsilon}|^2)\\
&\qquad\le \|1 + \log|u_{\varepsilon}|^2\|_{L^{\infty}(\Lambda\cap\{x:u_{\varepsilon}(x)> e^{-1/2}\})}\int_{\Lambda}|u_{\varepsilon}|^2,
\end{align*}
which and the similar regularity assertion in \cite{20} imply that there exists $x_{\varepsilon}\in \overline{\Lambda}$ such that
$$
u_{\varepsilon}(x_{\varepsilon}) = \sup_{x\in\Lambda}u_{\varepsilon}(x)\ \ \text{and}\ \liminf_{\varepsilon\to 0}\|u_{\varepsilon}\|_{L^{\infty}(B_{\varepsilon\rho}(x_{\varepsilon}))}>0.
$$
This proves $(i)$.

For $(ii)$, assuming without loss of generality that $\lim\limits_{\varepsilon\to 0}x_{\varepsilon} = x_*$, by the lower and upper bounds of $u_{\varepsilon}$ in Propositions \ref{pr3.2} and \ref{pr3.3}, we have
$$
\min_{x\in \Lambda}\mathcal{C}_{V(x)}\ge \liminf_{\varepsilon\to 0}\frac{J_{\varepsilon}(u_{\varepsilon})}{\varepsilon^N}\ge\mathcal{C}_{V(x_*)},
$$
which implies $V(x_*) = \min_{x\in\Lambda}V(x)$.

For $(iii)$, if it is not true, then one will get a contradiction like
$$
\min_{x\in \Lambda}\mathcal{C}_{V(x)}\ge \liminf_{\varepsilon\to 0}\frac{J_{\varepsilon}(u_{\varepsilon})}{\varepsilon^N}\ge\mathcal{C}_{V(x_*)} + \mathcal{C}_{V(y_*)}
$$
for some $y_*\in \overline{U}$ by Proposition \ref{pr3.3}.
\end{proof}

\subsection{Back to the origin problem}
In this subsection, we use Lemma \ref{le3.4} to linearize equation \eqref{eq2.1} and then use the well-known decay estimates of positive solutions of fractional Schr\"odinger equations  to show that \eqref{Adeq3.1} is true.

Noting that by the regular assertion in Appendix D of \cite{20}, we can assume that
\begin{equation}\label{eq5.1}
\sup_{\Lambda}u_{\varepsilon}(x)\le C<\infty,
\end{equation}
where $C$ is a positive constant. Hence by Lemma \ref{le3.4},
we can linearize the penalized equation \eqref{eq2.1} as follows.
\begin{proposition}\label{pr5.1}
Let $\varepsilon>0$ be small enough, $x_{\varepsilon}$ be the point given by Lemma \ref{le3.4}. Then there exists $R>0$ such that
$$
\left\{
  \begin{array}{ll}
   \varepsilon^{2s}(-\Delta)^su_{\varepsilon} + \min\{\lambda,2\} u_{\varepsilon}\le 0, & \text{in}\ \R^N\backslash B_{\varepsilon R}(x_{\varepsilon}) \\
    u_{\varepsilon}\le C, &\text{in}\ B_{\varepsilon R}(x_{\varepsilon})
  \end{array}
\right.
$$
\end{proposition}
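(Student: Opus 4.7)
My plan is to verify the desired differential inequality separately on two regions whose union covers $\R^N\setminus B_{\varepsilon R}(x_\varepsilon)$, exploiting the precise form of the penalized nonlinearity in \eqref{AAAeq1} together with the concentration information in Lemma \ref{le3.4}.

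First, I would verify that $B_{\varepsilon R}(x_\varepsilon)\subset\Lambda$ for every fixed $R$ once $\varepsilon$ is small. By Lemma \ref{le3.4}$(ii)$, $V(x_\varepsilon)\to\inf_\Lambda V=\lambda-1$ as $\varepsilon\to0$, whereas $(\mathcal{V}_2)$ and the continuity of $V$ give $V+1>\lambda$ on $\partial\Lambda\subset U\setminus\Lambda$, so every limit point of $x_\varepsilon$ lies in the interior of $\Lambda$ and stays uniformly away from $\partial\Lambda$. The $L^\infty$ bound in $B_{\varepsilon R}(x_\varepsilon)$ is then immediate from the regularity estimate \eqref{eq5.1}.

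Next, on $\R^N\setminus\Lambda$ the equation \eqref{eq2.1} reduces, since $G_1'(\cdot,u_\varepsilon)$ vanishes there, to
$$
\varepsilon^{2s}(-\Delta)^s u_\varepsilon+(V(x)+1)u_\varepsilon=-\max\bigl\{2u_\varepsilon,-u_\varepsilon(1+\log u_\varepsilon^2)\bigr\}.
$$
Since $u_\varepsilon>0$, the max is at least $2u_\varepsilon$, and $(\mathcal{V}_1)$ gives $V+1\ge0$, so
$$
\varepsilon^{2s}(-\Delta)^s u_\varepsilon\le-2u_\varepsilon\le-\min\{\lambda,2\}u_\varepsilon.
$$
This portion is purely algebraic and uses neither concentration nor smallness; it is the reason I chose the truncation in \eqref{AAAeq1} to be $\ge 2u_\varepsilon$ in the first place.

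Finally, on $\Lambda\setminus B_{\varepsilon R}(x_\varepsilon)$ the equation reduces to
$$
\varepsilon^{2s}(-\Delta)^s u_\varepsilon+\bigl(V(x)-\log u_\varepsilon^2\bigr)u_\varepsilon=0,
$$
so the target inequality is equivalent to $\log u_\varepsilon^2(x)\le V(x)-\min\{\lambda,2\}$. Using $V\ge\lambda-1$ on $\Lambda$, it suffices to make $u_\varepsilon(x)^2\le e^{\lambda-1-\min\{\lambda,2\}}$, i.e., below a fixed threshold depending only on $\lambda$, uniformly for $x\in\Lambda\setminus B_{\varepsilon R}(x_\varepsilon)\subset U\setminus B_{\varepsilon R}(x_\varepsilon)$. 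This is exactly what Lemma \ref{le3.4}$(iii)$ provides: I would choose $R$ large and then $\varepsilon_0$ small so that $\|u_\varepsilon\|_{L^\infty(U\setminus B_{\varepsilon R}(x_\varepsilon))}$ falls below this threshold for all $\varepsilon\in(0,\varepsilon_0)$. The only genuine subtlety is the \emph{uniformity} in $\varepsilon$ of the smallness on $\Lambda\setminus B_{\varepsilon R}(x_\varepsilon)$ as $R$ and $\varepsilon_0$ are fixed once for all, and this is precisely the statement of $(iii)$, so no additional estimate is needed.
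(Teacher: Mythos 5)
Your argument is correct and is essentially the paper's own proof, just written out in detail: the paper likewise uses Lemma \ref{le3.4} to get $1+\log u_{\varepsilon}^2\le 0$ on $U\setminus B_{\varepsilon R}(x_{\varepsilon})$, the structure of the penalization \eqref{AAAeq1} (the $\max\{2u,\cdot\}$ term together with $V+1\ge 0$) outside $\Lambda$, and \eqref{eq5.1} for the bound on $B_{\varepsilon R}(x_{\varepsilon})$. Your added verifications (that $B_{\varepsilon R}(x_{\varepsilon})\subset\Lambda$ for small $\varepsilon$, and the explicit threshold $e^{\lambda-1-\min\{\lambda,2\}}$) are details the paper leaves implicit, not a different route.
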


\begin{proof}
For $\varepsilon>0$ small enough, by Lemma \ref{le3.4}, there exists $R>0$ such that $
(1 + \log|u_{\varepsilon}|^2)\le 0
$
for all $x\in U\backslash B_{\varepsilon R}(x_{\varepsilon})$,
the conclusion then follows by the penalized function in \eqref{AAAeq1} and inserting \eqref{eq5.1} into \eqref{eq2.1}.
\end{proof}

At last, we prove Theorem \ref{th1.1}.

\textbf{Completes the proof of Theorem \ref{th1.1}}.

By Proposition \ref{pr5.1}, the rescaling function $v_{\varepsilon} = u_{\varepsilon}(\varepsilon x + x_{\varepsilon}$) satisfies
\begin{equation*}
\left\{
  \begin{array}{ll}
   (-\Delta)^s v_{\varepsilon} + \lambda v_{\varepsilon}\le 0, & \text{in}\ \R^N\backslash B_{R} \\
    v_{\varepsilon}\le C, &\text{in}\ B_{R}.
  \end{array}
\right.
\end{equation*}
Then by the well-known decay estimates of fractional Schr\"odinger equation(see \cite[Appendix D]{20} for example), we have
\begin{equation}\label{Adeq3.7}
v_{\varepsilon}(x)\le \frac{C}{1 + |x|^{N+2s}},\ x\in\R^N.
\end{equation}
Consequently, it holds
$$
u_{\varepsilon}(x)\le \frac{C\varepsilon^{N+2s}}{\varepsilon^{N+2s} + |x - x_{\varepsilon}|^{N+2s}},
$$
from which, we can conclude that
$$
\max\{2u_{\varepsilon},-\big(1 + \log(u_{\varepsilon})^2\big)\} = -\big(1 + \log(u_{\varepsilon})^2\big)\ \ \text{for all}\ x\in\R^N\backslash\Lambda
$$
if $\varepsilon>0$ is small. This proves that $u_{\varepsilon}$ solves the origin problem \eqref{eq1.1} and complete the proof of this paper.

\vspace{0.5cm}
\appendix
  \renewcommand{\appendixname}{Appendix~\Alph{section}}
  \section{}
In this section, we are going to verify \eqref{Adeq2.5} and \eqref{Adeq3.5}. The fractional Hardy inequality \eqref{Adeq1.4} will be involved in the proof. We first give the proof of \eqref{Adeq2.5}. By Cauchy inequality, we have
\begin{align*}
T_1(\eta)
&\le 2{\varepsilon^{2s}}\int_{\R^N}\mathrm{d}x\int_{\R^N} \frac{|u_n(x)|^2(\eta(x)-\eta(y))^2}{|x-y|^{N+2s}}\mathrm{d}y + C\|u_n\|^2_{V,\varepsilon}\\
&:= T_{11}(\eta) + C\|u_n\|^2_{V,\varepsilon}.
\end{align*}

By decomposition, denoting $f(x,y)=\frac{|u_n(x)|^2(\eta(x)-\eta(y))^2}{|x-y|^{N+2s}}$, we have
\begin{align*}
\frac{1}{2}T_{11}(\eta)
&=\varepsilon^{2s}\int_{\Lambda^{\delta}}\mathrm{d}x\int_{\R^N}f(x,y)\mathrm{d}y +  \varepsilon^{2s}\int_{\R^N\backslash\Lambda^{\delta}}\mathrm{d}x\int_{\Lambda^{\delta}}f(x,y)\mathrm{d}y\\
&:=\sum_{i=1}^2T_{11i}(\eta).
\end{align*}

Since
\begin{align*}
\int_{\R^N}\frac{(\eta(x) - \eta(y))^2}{|x-y|^{N+2s}}\mathrm{d}y
&=\int_{B_1(x)}\frac{(\eta(x) - \eta(y))^2}{|x-y|^{N+2s}}\mathrm{d}y+\int_{\R^N\backslash B_1(x)}\frac{(\eta(x) - \eta(y))^2}{|x-y|^{N+2s}}\mathrm{d}y\\
&\le C\Big(\int_{B_1(0)}\frac{1}{|z|^{N+2s-2}}\mathrm{d}y+\int_{\R^N\backslash B_1(0)}\frac{1}{|z|^{N+2s}}\mathrm{d}y\Big)\\
&\le C,
\end{align*}
we have
\begin{eqnarray}\label{eqA.1}
\begin{split}
T_{111}(\eta)
\le C\varepsilon^{2s}\int_{\Lambda^{\delta}}|u_n(x)|^2
\le C\|u_n\|^2_{V,\varepsilon}.
\end{split}
\end{eqnarray}

For $T_{112}(\eta)$, similar to the estimates above, by fractional Hardy inequality \eqref{Adeq1.4}, we have
\begin{eqnarray}\label{eqA.2}
\begin{split}
T_{112}(\eta)
&\le  C\varepsilon^{2s}\int_{\R^N\backslash\Lambda^{2\delta}}|u_n(x)|^2\mathrm{d}x\int_{\Lambda^{\delta}}\frac{1}{|x-y|^{N+2s}}\mathrm{d}y\\
&\le  C\varepsilon^{2s}\int_{\R^N\backslash\Lambda^{2\delta}}\frac{|u_n(x)|^2}{|x|^{2s}}\mathrm{d}x\int_{\Lambda^{\delta}}\frac{|x|^{2s}}{|x-y|^{N+2s}}\mathrm{d}y\\
&\le C\varepsilon^{2s}\int_{\R^N\backslash\Lambda^{2\delta}}\frac{|u_n(x)|^2}{|x|^{2s}}\mathrm{d}x\int_{\Lambda^{\delta}}\frac{|x|^{2s}}{\big(dist(x,\partial\Lambda^{2\delta}) + \delta + dist(y,\partial\Lambda^{\delta})\big)^{N+2s}}\mathrm{d}y\\
&\le C\|u_n\|^2_{V,\varepsilon}.
\end{split}
\end{eqnarray}

By \eqref{eqA.1} and \eqref{eqA.2}, we conclude that`
$$
T_1(\eta)\le C\|u_n\|^2_{V,\varepsilon}.
$$

Next, we give the proof of \eqref{Adeq3.5}. A change of variable tells us
\begin{eqnarray*}
\begin{split}
T^2_{n,R}& = \int_{\mathbb{R}^N}u_n(y)\,{\mathrm{d}}y\int_{\mathbb{R}^N}\frac{(u_n(x) - u_n(y))(\psi_{n,R}(x) - \psi_{n,R}(y))}{|x - y|^{N + 2s}}\,{\mathrm{d}}x\\
 & = \varepsilon^{N - 2s}_n\sum_{l = 1}^k\int_{\mathbb{R}^N}v^l_n(y)\beta^l_n(y)\,{\mathrm{d}}y\int_{\mathbb{R}^N}\frac{\alpha^l_n(x)(v^l_n(x) - v^l_n(y))(\eta_R(x) - \eta_R(y))}{|x - y|^{N + 2s}}\,{\mathrm{d}}x,
\end{split}
\end{eqnarray*}
where the functions $\beta^l_n$ and $\alpha^l_n$ are defined skillfully as
$$
\beta^l_n(y) = \prod^{l - 1}_{s = 0}\eta\Big(\frac{y}{R} + \frac{x^l_n - x^s_n}{\varepsilon_n R}\Big),\,\,\,\beta^0_n(y)\equiv 1
$$
and
$$
\alpha^l_n(x) = \prod^{k}_{s = l + 1}\eta\Big(\frac{x}{R} + \frac{x^l_n - x^s_n}{\varepsilon_n R}\Big),\,\,\, \alpha^k_n(x) \equiv 1.
$$
Following, we have
\begin{align*}
\varepsilon^{2s - N}_nT^2_{n,R}& = \sum_{l = 1}^k\int_{B_{2R}} v^l_n(y) \beta^l_n(y) \,{\mathrm{d}}y \int_{B^c_{2R}}\frac{\alpha^l_n(x)(v^l_n(x) - v^l_n(y))(\eta_R(x) - \eta_R(y))}{|x - y|^{N + 2s}}\,{\mathrm{d}}x
\\[1mm]
   &\quad + \sum_{l = 1}^k \int_{B^c_{2R}}v^l_n(y)\beta^l_n(y)\,{\mathrm{d}}y  \int_{B_{2R}}\frac{\alpha^l_n(x)(v^l_n(x) - v^l_n(y))(\eta_R(x) - \eta_R(y))}{|x - y|^{N + 2s}}\,{\mathrm{d}}x
   \\[1mm]
   &\quad + \sum_{l = 1}^k\int_{B_{2R}}v^l_n(y)\beta^l_n(y)\,{\mathrm{d}}y   \int_{B_{2R}}\frac{\alpha^l_n(x)(v^l_n(x) - v^l_n(y))(\eta_R(x) - \eta_R(y))}{|x - y|^{N + 2s}}\,{\mathrm{d}}x\\[1mm]
   &:=T^{21}_{n,R} +T^{22}_{n,R} + T^{23}_{n,R}.
\end{align*}
By the choice of $\eta_R$ and  $\lim\limits_{n\to\infty}\frac{|x^l_n - x^s_n|}{\varepsilon_n} = \infty$ if $l\neq s$, for $n$ large, we have
\begin{align*}
T^{21}_{n,R}&= \sum_{l = 1}^k\int_{B_{2R}\backslash B_R}v^l_n(y)\beta^l_n(y)\,{\mathrm{d}}y\int_{B^c_{2R}}\frac{\alpha^l_n(x)(v^l_n(x) - v^l_n(y))(1 - \eta_R(y))}{|x - y|^{N + 2s}}\,{\mathrm{d}}x\\[1mm]
       &\quad + \sum_{l = 1}^k\int_{{B_R}}v^l_n(y)\beta^l_n(y)\,{\mathrm{d}}y\int_{B^c_{2R}}\frac{\alpha^l_n(x)(v^l_n(x) - v^l_n(y))}{|x - y|^{N + 2s}}\,{\mathrm{d}}x\\[1mm]
       &= \sum_{l = 1}^k\int_{B_{2R}\backslash B_R}v^l_n(y)\,{\mathrm{d}}y\int_{B^c_{2R}}\frac{\alpha^l_n(x)(v^l_n(x) - v^l_n(y))(1 - \eta_R(y))}{|x - y|^{N + 2s}}\,{\mathrm{d}}x\\[1mm]
       &\quad + \sum_{l = 1}^k\int_{{B_R}}v^l_n(y)\,{\mathrm{d}}y\int_{B^c_{2R}}\frac{\alpha^l_n(x)(v^l_n(x) - v^l_n(y))}{|x - y|^{N + 2s}}\,{\mathrm{d}}x\\[1mm]
       :&=T^{211}_{n,R} +T^{212}_{n,R}
\end{align*}
and
\begin{align*}
        T^{22}_{n,R}
          =& \sum_{l = 1}^k\int_{B^c_{2R}}v^l_n(y)\beta^l_n(y)\,{\mathrm{d}}y\int_{B_{2R}\backslash B_R}\frac{\alpha^l_n(x)(v^l_n(x) - v^l_n(y))(\eta_R(x) - 1)}{|x - y|^{N + 2s}}\,{\mathrm{d}}x\\[1mm]
         & - \sum_{l = 1}^k\int_{B^c_{2R}}v^l_n(y)\beta^l_n(y)\,{\mathrm{d}}y\int_{B_R}\frac{\alpha^l_n(x)(v^l_n(x) - v^l_n(y))}{|x - y|^{N + 2s}}\,{\mathrm{d}}x \\[1mm]
         =& \sum_{l = 1}^k\int_{B^c_{2R}}v^l_n(y)\beta^l_n(y)\,{\mathrm{d}}y\int_{B_{2R}\backslash B_R}\frac{(v^l_n(x) - v^l_n(y))(\eta_R(x) - 1)}{|x - y|^{N + 2s}}\,{\mathrm{d}}x\\[1mm]
         & - \sum_{l = 1}^k\int_{B^c_{2R}}v^l_n(y)\beta^l_n(y)\,{\mathrm{d}}y\int_{B_R}\frac{v^l_n(x) - v^l_n(y)}{|x - y|^{N + 2s}}\,{\mathrm{d}}x \\[1mm]
                  :=&T^{221}_{n,R} + T^{222}_{n,R}.
\end{align*}
Also, for large $n$,
\begin{align*}
T^{23}_{n,R} =&\sum_{l = 1}^k\int_{B_{2R}\backslash{B_R}}v^l_n(y)\,{\mathrm{d}}y\int_{B_{2R}}\frac{(v^l_n(x) - v^l_n(y))(\eta_R(x) - \eta_R(y))}{|x - y|^{N + 2s}}\,{\mathrm{d}}x\\[1mm]
& + \sum_{l = 1}^k\int_{B_{2R}}v^l_n(y)\,{\mathrm{d}}y\int_{B_{2R}\backslash{B_R}}\frac{(v^l_n(x) - v^l_n(y))(\eta_R(x) - \eta_R(y))}{|x - y|^{N + 2s}}\,{\mathrm{d}}x\\[1mm]
&+ \sum_{l = 1}^k\int_{B_{2R}\backslash{B_R}}v^l_n(y)\,{\mathrm{d}}y\int_{B_{2R}\backslash B_{R}}\frac{(v^l_n(x) - v^l_n(y))(\eta_R(x) - \eta_R(y))}{|x - y|^{N + 2s}}\,{\mathrm{d}}x\\[1mm]
:=& T^{231}_{n,R} + T^{232}_{n,R} + T^{233}_{n,R}.
\end{align*}

For $|T^{2i2}_{n,R}|,\ i = 1,2$, it holds
\begin{align*}
\nonumber    &\limsup_{n\to\infty}  |T^{2i2}_{n,R}|  \leq CR^{-2s} + \limsup_{n\to\infty}2\sum_{l = 1}^k\int_{B^c_{2R}}\,{\mathrm{d}}y\int_{B_{R}}\frac{ (v^l_n(y))^2}{|x-y|^{N + 2s}}\,{\mathrm{d}}x.
\end{align*}
By the fractional Hardy inequality \eqref{Adeq1.4} and letting $\widetilde{R} = R^{\frac{N + 1}{N}}$, we find
\begin{align}\label{Adeq3.3}
\nonumber&\quad \limsup_{n\to\infty}\int_{B^c_{2R}}\big(v^l_n(y))^2\,{\mathrm{d}}y\int_{B_R}\frac{1}{|x - y|^{N + 2s}}\,{\mathrm{d}}x\\[1mm]
\nonumber&\leq C\limsup_{n\to\infty}\int_{B^c_{2R}}\big(v^l_n(y))^2\frac{R^N}{|y|^{N + 2s}}\,{\mathrm{d}}y\\[1mm]
\nonumber& \le C\limsup_{n\to\infty}\int_{B_{\widetilde{R}}\backslash B_{2R}}\big(v^l_n(y))^2\frac{R^N}{|y|^{N + 2s}}\,{\mathrm{d}}y + C\limsup_{n\to\infty} \int_{B^c_{\widetilde{R}}}\big(v^l
_n(y))^2\frac{R^N}{|y|^{N + 2s}}\,{\mathrm{d}}y\\[1mm]
&\leq C\limsup_{n\to\infty}\int_{B_{\widetilde{R}}\backslash B_{2R}}\big(v^l_n(y))^2\,{\mathrm{d}}y + C \limsup_{n\to\infty}\int_{B^c_{\widetilde{R}}}\frac{(v^l_n(y))^2}{|y|^{2s}}\frac{R^N}{|y|^{N}}\,{\mathrm{d}}y\\[1mm]
\nonumber&\leq  C\int_{B_{\widetilde{R}}\backslash B_{2R}}\big(v^l_*(y))^2\,{\mathrm{d}}y + \frac{C}{R}\\[1mm]
\nonumber& = o_R(1).
\end{align}

Noting that for each $\Omega\subset\subset\R^N$ is smooth, it holds
$$
v^l_n\rightharpoonup v^l_*\ \text{weakly in}\ W^{s,2}(\Omega).
$$
Then, for $T^{211}_{n,R}$, by the estimates of  $T^{2i2}_{n,R}$($i=1,2$), we have

\begin{align*}
&\limsup_{n\to\infty}  |T^{211}_{n,R}| \nonumber
\\[1mm]
&\leq
  \limsup_{n\to\infty}  \sum_{l = 1}^k\int_{B_{2R}\backslash B_R}\,{\mathrm{d}}y\int_{B^c_{2R}}\frac{|v^l_n(x) - v^l_n(y)|^2}{|x - y|^{N + 2s}}\,{\mathrm{d}}x \nonumber
  \\[1mm]
 &
 + \limsup_{n\to\infty}\sum_{l = 1}^k \int_{B_{2R}\backslash B_R}(v^l_n(y))^2\,{\mathrm{d}}y \int_{B^c_{2R}}\frac{(1 - \eta_{R}(y))^2}{|x - y|^{N + 2s}}\,{\mathrm{d}}x \nonumber
  \\[1mm]
 & \leq  \limsup_{n\to\infty}  \sum_{l = 1}^k  \int_{B_{2R}\backslash B_R}\,{\mathrm{d}}y\int_{B^c_{2R}\cap B_{4R}}\frac{|v^l_n(x) - v^l_n(y)|^2}{|x - y|^{N + 2s}}\,{\mathrm{d}}x\nonumber
  \\[1mm]
 &+\limsup_{n\to\infty}\sum_{l = 1}^k \int_{B_{2R}\backslash B_R}\,{\mathrm{d}}y\int_{B^c_{4R}}\frac{|v^l_n(x) - v^l_n(y)|^2}{|x - y|^{N + 2s}}\,{\mathrm{d}}x + C\limsup_{n\to\infty}\sum_{l = 1}^k\int_{B_{2R}\backslash B_R}(v^l_n(y))^2\,{\mathrm{d}}y\nonumber
 \\[1mm]
 & \leq  \limsup_{n\to\infty}\sum_{l = 1}^k\int_{B_{4R}\backslash B_R}\,{\mathrm{d}}y\int_{B_{4R}\backslash B_R}\frac{|v^l_n(x) - v^l_n(y)|^2}{|x - y|^{N + 2s}}\,{\mathrm{d}}x + C\sum_{l = 1}^k\int_{\R^N}(v^*_l(y))^2\mathrm{d}y\nonumber
 \\[1mm]
& \leq C\int_{B_{4R}\backslash B_R}\,{\mathrm{d}}y\int_{R^N}\frac{|v^l_*(x) - v^l_*(y)|^2}{|x - y|^{N + 2s}}\,{\mathrm{d}}x + o_R(1)\\
&=o_R(1).
\end{align*}
Similarly, we get
\begin{align*}
    \limsup_{n\to\infty}  |T^{221}_{n,R}| &\leq o_R(1)
\end{align*}
and
\begin{align*}
  \limsup_{n\to\infty}|T^{231}_{n,R} + T^{232}_{n,R} + T^{233}_{n,R}|\leq o_R(1).
\end{align*}
Therefore
\begin{equation*}
\varepsilon^{2s - N}T^{2}_{n,R} \leq o_R(1).
\end{equation*}

\vspace{0.5cm}


\vspace{1cm}

\end{document}